\newcommand{\QQ}{\mathbb{Q}}
\newcommand{\ZZ}{\mathbb{Z}}
\newcommand{\FF}{\mathbb{F}}
\newcommand{\GL}{\operatorname{GL}}
\newcommand{\End}{\operatorname{End}}
\newcommand{\Pic}{\operatorname{Pic}}
\renewcommand{\div}{\operatorname{div}}
\begin{document}

\title{Extending class group action attacks via sesquilinear pairings}

\author{Joseph Macula\inst{1}\orcidID{0009-0004-1843-6459}\\and
Katherine E. Stange\inst{1}\orcidID{0000-0003-2294-0397}}

\authorrunning{Macula, J. and Stange, K.E.}

\institute{University of Colorado Boulder, Boulder, USA \\
\email{Joseph.Macula@colorado.edu,kstange@math.colorado.edu}}

\maketitle             

\begin{abstract}
	We introduce a new tool for the study of isogeny-based cryptography, namely pairings which are sesquilinear (conjugate linear) with respect to the $\mathcal{O}$-module structure of an elliptic curve with CM by an imaginary quadratic field $\mathcal{O}$.  We use these pairings to study the security of problems based on the class group action on collections of oriented ordinary or supersingular elliptic curves.  This extends work of \cite{attacks} and \cite{Level}.
\keywords{Isogeny-bas/Users/josephmacula/Desktop/untitled foldered cryptography  \and Pairings \and Elliptic Curves.}
\end{abstract}

\section{Introduction}

The use of isogeny graphs in cryptography dates to \cite{CGL,Couveignes,RS}.  The latter proposals were for public-key cryptography based on an ordinary isogeny graph.  In particular, the class group $\operatorname{Cl}(\mathcal{O})$ of an order $\mathcal{O}$ in an imaginary quadratic field $K$ acts on the set of ordinary elliptic curves over $\overline{\FF}_p$ with CM by $\mathcal{O}$.  For efficiency, CSIDH was proposed \cite{CSIDH}, making use of supersingular curves with an action by the class group of the Frobenius field.  More recently, this was generalized to OSDIH \cite{OSIDH}, making use of other imaginary quadratic fields in the endomorphism algebra.  Recently, SIDH adaptations based on related ideas have been proposed \cite{NewCounter}.  Our paper concerns \emph{oriented} elliptic curves, which refers to attaching the data of an embedding of a particular imaginary quadratic order $\mathcal{O}$ into the endomorphism ring.  All these public-key proposals are examples of class group actions on oriented curves.

The security of these schemes relies on variants of the Diffie-Hellman problem for the class group action.  The security of these problems has drawn a great deal of interest, and not all instances of the problem have so far proven to be secure.  If the class group is even, the decisional Diffie-Hellman problem is broken by the use of genus theory \cite{DDH1,DDH2}.  These papers make use of the Weil and Tate pairings to compute certain associated characters.  More recently, \cite{attacks} makes use of generalizations of Weil and Tate pairings to break certain instances of the class group action problem (i.e., determining which class group element takes one given oriented curve to another) when the discriminant has a large smooth square factor and the degree is known.  Pairings have also appeared in the study of oriented elliptic curves in \cite{pairing-volcano}, to navigate the isogeny graph.  For other interactions between pairings and isogeny-based cryptography, see \cite{PairingGroups,Effective}. 

The attacks in \cite{attacks} use pairings to reduce a hidden isogeny problem with known degree for the class group action to the SIDH problem recently broken using higher dimensional abelian varieties \cite{SIDHbreak1,SIDHbreak2,SIDHbreak3}.  In short, if the degree of a secret isogeny $\phi: E \rightarrow E'$ is known, and it is known that $\phi P \in \ZZ P'$ for $P \in E$ and $P' \in E'$, then we can make use of a relationship of the form
\[
	\langle P, P \rangle^{\deg \phi} = \langle \phi P, \phi P \rangle =  \langle k P', k P' \rangle = \langle P', P' \rangle^{k^2}
\]
by solving a discrete logarithm to obtain the relationship $k^2 \equiv \deg \phi \pmod{m}$, and thereby solve for $k$.  With this, we (essentially) obtain the image $\phi P$ of $P$, which is the type of information provided in the SIDH problem.  The classical SIDH problem (for which we now have efficient methods) requires the image of two basis points, and this provides only one.  To close the gap, \cite{attacks} uses results of \cite{Level} which reduce SIDH$_1$, in which the image of only one torsion point is provided, to classical SIDH, provided the order of the point is square.  More recent work presented but not yet available \cite{unpubCastryck} uses pairings to generalize the SIDH attacks so that torsion images of any sufficiently large subgroup suffice.

These attacks require that the degree of the secret isogeny is known.  This is the case in constant-time implementations aimed at preventing side-channel attacks such as those in \cite{Side}; see \cite{attacks} for more details.  Furthermore, in \cite[Lemma 14]{Level}, the authors give a heuristic reduction from the group action 
 problem to the same problem with known degree.  \textbf{In this paper we will assume throughout that the degree of the secret isogeny is known.}

In this paper we introduce a new tool for understanding these results and pushing such attacks further.  In \cite{katepairing}, certain new generalized pairings $\widehat{W}$ and $\widehat{T}$ (generalizing the usual Weil and Tate pairings) are defined, which are $\mathcal{O}$-sesquilinear, meaning that
\[
	\langle \alpha x, \beta y \rangle = \langle x, y \rangle^{\overline{\alpha}\beta}
\]
for $\alpha, \beta \in \mathcal{O}$.  In particular, they take values in an $\mathcal{O}$-module formed by extending scalars from the usual domain $\FF_q^*$.  

In particular, we need now assume only that $\phi P \in \mathcal{O}P'$ and obtain a relationship
\[
	\langle P, P \rangle^{\deg \phi} = \langle \phi P, \phi P \rangle =  \langle \lambda P', \lambda P' \rangle = \langle P', P' \rangle^{N(\lambda)},
\]
where $\lambda \in \mathcal{O}$.  The new pairings are amenable to a Miller-type effective algorithm for their computation, and carry all the useful properties of the Weil and Tate pairings, especially compatibility with $\mathcal{O}$-oriented isogenies.

The paper \cite{attacks} provides a taxonomy of known generalized pairings, but all of these are only $\ZZ$-bilinear with image in $\FF_q^*$.

One important difference of these sesquilinear pairings from the generalized pairings previously considered is their non-degeneracy.  In \cite{attacks}, there is a classification theorem for \emph{cyclic self-pairings compatible with oriented endomorphisms}. These are functions $f_m : C \rightarrow \mu_m$ where $C$ is a cyclic subgroup of $E[m]$ whose image under $f_m$ spans $\mu_m$, with the following properties: $f(\lambda P) = f(P)^{\lambda^2}$, $\iota(\sigma)(P) \in C$, and $f(\iota(\sigma)P) = f(P)^{N(\iota(\sigma))}$ for $\iota$ an orientation of a given imaginary quadratic order $\mathcal{O}$, $\sigma \in \mathcal{O}$, and $P \in C$. They essentially show that such pairings can only be non-trivial for $m$ dividing the discriminant $\Delta_\mathcal{O}$ of $\mathcal{O}$.

The requirement that $m$ divide $\Delta_{\mathcal{O}}$ limits the applicability of their attacks on the class group action to situations where the discriminant has a good factorization.
 We demonstrate that by extending to $\mathcal{O}$-sesquilinear pairings, whose domain is not $\ZZ$-cyclic but instead $\mathcal{O}$-cyclic, we obtain many more non-trivial self-pairings to work with.

 The use of these new $\mathcal{O}$-sesquilinear pairings offers several clarifying conceptual advantages, and partially answers several of the open problems posed in \cite{attacks}.  However, they are not a magic bullet:  we show (Theorem~\ref{thm:equiv}) that the computation of these pairings is essentially equivalent to the computation of the $\mathcal{O}$-orientation, provided discrete logarithms are efficient in $\mu_m$ (for example, if $m$ is smooth).
 \vspace{1em}

\textbf{Conceptual contributions.}
\begin{enumerate}
\item We introduce the new $\mathcal{O}$-sesquilinear pairing $\widehat{T}$ in the cryptographic context.
\item We show that these pairings give rise to many non-degenerate $\mathcal{O}$-cyclic self-pairings, without a requirement that $m$ divide the discriminant (Theorem~\ref{thm:order}).
\item We characterize elliptic curves for which $E[m]$ is a cyclic $\mathcal{O}$-module (Theorem~\ref{thm:cyclic}):  $E[m]$ is $\mathcal{O}$-cyclic if and only if the $\mathcal{O}$-orientation is $m$-primitive. 
\item We show an equivalence between computation of an $\mathcal{O}$-orientation and the computation of $\mathcal{O}$-sesquilinear pairings for nice $m$  (Theorem~\ref{thm:equiv}).

\item Corollary~\ref{cor:festa} and Theorem~\ref{thm:sidh} (described in more detail below) provide evidence for a trade-off between the amount of known level structure of a secret isogeny $\phi : E \to E'$ of degree $d$ and how much of the endomorphism rings of $E$ and $E'$ we need to represent to find $\phi$.  As shown in \cite{Wesolowski}, the fixed-degree isogeny problem with full level structure is equivalent to finding a representation of the full endomorphism ring of $E$ and $E'$, while \cite{SIDHbreak1,SIDHbreak2,SIDHbreak3} show that the fixed-degree isogeny problem with minimal level structure requires no knowledge of even a partial representation of the endomorphism rings of $E$ and $E'$. As described in \emph{Cryptographic contributions} items~\ref{item:sidh} and~\ref{item:festa} below, knowledge of an intermediate level structure can be combined with a representation of only ``half'' of the endomorphism rings of $E$ and $E'$, to provide attacks on hidden isogenies of known degree.  See also the work in \cite{Level}, which explores varying amounts of level structure.

	\end{enumerate}

\textbf{Cryptographic contributions.} 
\begin{enumerate}
	\item We extend the applicability of the (sometimes polynomial) attacks from \cite{attacks} on the class group action problem (Section~\ref{sec:ramified}).  These attacks run for smooth $m$ dividing the discriminant.  We recover these attacks using the new pairings in a slightly different way, with the advantage that our pairing computations do not require going to a large field extension.  This partially addresses one of the open questions of \cite[Section 7]{attacks}.  Example~\ref{ex:wouter} gives an explicit situation in which the reach of polynomial attacks is strictly extended.
	\item \label{item:sidh} We demonstrate a pairing-based reduction from SIDH$_1$ to SIDH in the oriented situation for $E[m]$, where $m$ is smooth and coprime to the discriminant (Theorem~\ref{thm:sidh}), resulting in an attack when $m^2 > \deg \phi$.  This partially addresses the first and second open problems in \cite[Section 7]{attacks}.  Existing attacks on SIDH$_1$ (which apply without orientation information) require $m > \deg \phi$. 
	\item \label{item:festa} We reduce the hard problem underlying FESTA \cite{festa} to finding an orientation of the secret isogeny $\phi: E \rightarrow E'$ (i.e. an orientation of both curves and the isogeny between them) (Corollary~\ref{cor:festa}).  This follows from an attack on the Diagonal SIDH Problem (Theorem~\ref{thm:festa}).
 	\item We show how these pairings, using orientation information, easily reveal \emph{partial} information on the image of a torsion point $P$ of order $m$ for $m$ smooth (Theorem~\ref{thm:Nlambda}).  This results in an algorithm to break class-group-based schemes by running the SIDH attack on $\sqrt{\deg(\phi)}$ candidate torsion points as images under $\phi$ (Remark~\ref{rem:sqrt}). 
	\item Our results should be considered a cautionary tale for the design of decisional problems based on torsion point images, such as in \cite{SiGamal}, since the possible images of torsion points is restricted.  We discuss this in Remark~\ref{rem:decisional}.
	\item In the supersingular case, we demonstrate a method of finding the secret isogeny in the presence of two independent known orientations (which amounts to an explicit subring of the endomorphism ring of rank $4$), provided the secret isogeny is oriented for both orientations.  This is not a surprise, as this problem could be solved by the KLPT algorithm if the endomorphisms are obtained by walking the graph (see \cite{path}, and also \cite{KLPT,Wesolowski}), but it provides a new method via a simple reduction to the SIDH problem.  (Section~\ref{sec:twoorient}.)
\end{enumerate}

\section{Background}
\label{sec:back}

\subsection{Notations.} We study elliptic curves, typically denoted $E$, $E'$ etc., defined over finite fields, denoted by $\FF$ in general.  Denote an algebraic closure of $\FF$ by $\bar \FF$. The identity on $E$ is denoted $\infty$, and $\End(E)$ is the endomorphism ring over $\overline{\FF}$. We study imaginary quadratic fields, denoted $K$ in general, and orders in such fields, denoted by $\mathcal{O}$, $\mathcal{O}'$ etc. Greek letters typically denote elements of the orders. We denote the norm of an element $\lambda$ of a given order by $N(\lambda)$. When considering the action of an element $\alpha \in \mathcal{O}$ on a point $P$, we write $[\alpha]P$. The Greek letter $\phi$ always refers to an isogeny and $\widehat{\phi}$ always denotes the dual isogeny of $\phi$. For ease of notation, we write $\phi P$ instead of $\phi(P)$. Throughout the paper, we write $\mu_m$ for the copy of $\mu_m$ in a finite field.

\subsection{Orientations.} We study $\mathcal{O}$-\emph{oriented} elliptic curves over finite fields, which are curves together with the information of an embedding $\iota: \mathcal{O} \rightarrow \End(E)$.  This extends to an embedding of the same name, $\iota:  K \rightarrow \QQ \otimes_\ZZ \End(E)$, and the $\mathcal{O}$-orientation is called \emph{primitive} if $\iota(K) \cap \End(E) = \iota(\mathcal{O})$.  If the index $[\iota(K) \cap \End(E) : \iota(\mathcal{O}) ]$ is coprime to $n$, we say the orientation is $n$-\emph{primitive}.  Given an $\mathcal{O}$-orientation, there is a unique $\mathcal{O}' \supseteq \mathcal{O}$ for which $\iota$ becomes a $\mathcal{O}'$-primitive orientation, namely $\iota(\mathcal{O}') = \iota(K) \cap \End(E)$.  Given an elliptic curve $E$ with an $\mathcal{O}$-orientation, we define the \emph{relative conductor} of $\mathcal{O}$ to be the index $[\mathcal{O}': \mathcal{O}]$, for which the orientation is $\mathcal{O}'$-primitive.

If $\phi: E \rightarrow E'$ is an isogeny between two $\mathcal{O}$-oriented elliptic curves $(E, \iota)$ and $(E', \iota')$ is such that $\phi \circ \iota(\alpha) = \iota'(\alpha) \circ \phi$ for all $\alpha \in \mathcal{O}$, then we say that $\phi$ is an oriented isogeny.  Throughout the paper, we will generally fix a single $\mathcal{O}$-orientation for any curve, so we will often drop the $\iota$, writing simply $[\alpha]$ for $\iota(\alpha)$, writing $\mathcal{O} \subseteq \End(E)$, and characterizing oriented isogenies as those for which $\phi \circ [\alpha] = [\alpha] \circ \phi$.  This saves on notation.

\subsection{Cyclic self-pairings} \label{sec:CHM}

CSIDH, introduced in \cite{CSIDH}, relies for its security on the presumed hardness of the following instance of the \textit{vectorization problem}: given a (large) prime $p \equiv 3 \pmod{4}$ and two supersingular curves $E$ and $E'$ over $\FF_p$, find the ideal class $[\mathfrak{a}]$ of $ \mathcal{O} = \ZZ[\sqrt{-p}]$ such that $E' = [\mathfrak{a}]E$. More generally, the vectorization problem can be phrased as follows: given two supersingular elliptic curves $E, E'$ primitively oriented by an imaginary quadratic order $\mathcal{O}$ and known to be connected by the action of the ideal class group cl$(\mathcal{O})$ of $\mathcal{O}$, find $[\mathfrak{a}] \in \text{cl}(\mathcal{O})$ such that $E' = [\mathfrak{a}]E$.  This is also known as a class-group-action problem.

This paper builds on the previous work of \cite{attacks}. There, the authors ask whether the attack on SIDH \cite{SIDHbreak1,SIDHbreak2,SIDHbreak3} that renders the protocol insecure can be applied to solving the vectorization problem. In brief, they note that one can treat the SIDH attack as an oracle, which when given the degree $d$ of a secret $\FF_q$-rational isogeny $\phi$ between curves $E$ and $E'$ defined over $\FF_q$ and knowledge of its action on a basis of $E[m]$ for $m$ coprime to $d$ and $m^2 > 4d$, returns $\phi$. Assuming the degree $d$ of $\phi$ is known, the question of whether this oracle can answer the vectorization problem therefore reduces to the question of whether one can determine the action of $\phi$ on a basis of $E[m]$ for suitable $m$. 

The following example, reproduced directly from \cite{attacks}, is instructive. Assume the context of CSIDH, i.e., that the relevant order is $\ZZ[\sqrt{-p}]$. Choosing $m$ to be the power of a small prime $l$ coprime to $d$ that splits in $\QQ(\sqrt{-p})$, $E[m]$ has a basis $\{P, Q\}$ consisting of eigenvectors of the Frobenius endomorphism $\pi_p$. Since by assumption the curves $E, E'$ and the isogeny $\phi$ are all defined over $\FF_p$, $E'[m]$ has a basis $\{P', Q'\}$ consisting of eigenvectors of $\pi_p$ and $\phi P = r P'$, $\phi Q = s Q'$ for $r,s \in \left ( \ZZ/m\ZZ \right )^\times$. The bilinearity and compatiblity with isogenies of the $m$-Weil pairing imply that

\begin{equation}
	\label{eqn:weilattack}
	e_m(P', Q') = e_m(P, Q)^{rs d}
\end{equation}

\noindent With Miller's algorithm \cite{Miller}, computation of the $m$-Weil pairing is efficient. Since $m$ is a power of a small prime, also efficient is computation of discrete logarithms. Thus, given knowledge of $d$ it remains to determine one of $r$ or $s$. Yet properties of the $m$-Weil pairing imply that $e_m(P', P') = 1$; thus, one cannot compute $e_m(P', P') = e_m(P, P)^{r^2 d}$ to find $r$.

In \cite{attacks}, this obstacle is surmounted via the construction of cyclic self-pairings that are compatible with $\mathcal{O}$-oriented isogenies. A cyclic self-pairing is a function $f$ defined on a finite cyclic subgroup $C$ of an elliptic curve $E/\FF$ with the property that 

\[f(r P) = f(P)^{r^2} \hspace{.25cm} \text{for all } P \in C \text{ and } r \in \ZZ. \]

\noindent When $E$ and $E'$ are two curves over $\FF$ with orientations of an imaginary quadratic order $\mathcal{O}$ by $\iota, \iota'$, respectively, two self-pairings $f$ and $f'$ defined on finite subgroups $C$ of $E$ and $C'$ of $E'$ are compatible with $\mathcal{O}$-oriented isogenies $\phi : E \to E'$ when $\phi(C) \subset C'$ and $f'(\phi P) = f(P)^{\deg \phi}$. If $\phi$ is an $\mathcal{O}$-oriented isogeny from $E$ to $E'$ of degree $d$ coprime to an integer $m$ such that $E$ and $E'$ have non-trivial cyclic self-pairings $f$ and $f'$ compatible with $\mathcal{O}$-oriented isogenies on cyclic subgroups $\langle P \rangle$, $\langle P' \rangle$, then 

\[f'(P') = f(P)^{d r^2}\]

\noindent for some $r \in \ZZ/m\ZZ$. If furthermore discrete logarithms are efficiently computable modulo $m$, then the non-triviality of $f$ and $f'$ implies that one can efficiently determine $r^2$ modulo $m$. Assuming $m$ has a nice factorization, this leaves only a few possibilities for $r$, and one simply guesses by direct computation which one is correct.

The non-trivial self-pairings constructed in \cite{attacks} are generalizations on the classical reduced $m$-Tate pairing. We refer the reader to Section 5 of \cite{attacks} for further details. Crucially, the order $m$ of non-trivial cyclic self-pairings compatible with $\mathcal{O}$-oriented isogenies must divide $\Delta_{\mathcal{O}}$ (\cite{attacks}, Proposition 4.8). Furthermore, the existence of such a self-pairing only yields knowledge of the image of a single torsion point $P$ under $\phi$. In \cite{attacks}, this latter issue is addressed by assuming $m^2 \mid \Delta_{\mathcal{O}}$. Then one obtains the image of an order $m^2$ point $P$ under $\phi$. The authors briefly describe how one can then obtain from this data knowledge of the action of $\phi$ on a basis for $E[m]$. A more systematic description of this reduction in the language of level structure is in Section 5 of \cite{Level} (in particular, see corollary 12).

Thus, there are two significant limitations to the scope of this attack. First, $\Delta_{\mathcal{O}}$ must contain a large smooth square factor. Second, in general one must work over a field where the $m^2$-torsion is fully rational. In the worst case, this requires a base change to an extension of potentially large degree.

The first of these limitations is addressed in work in preparation by Castryck et. al. \cite{unpubCastryck}, the authors show that with knowledge of the image of $\phi : E \to E'$ on a generating set $S$ for a subgroup $G$ with $\#G > 4d$, there is an algorithm to determine $\phi$ (in the sense of computing arbitrary images) that is polynomial in \begin{enumerate} \item[(1)] the size of the $S$ and $\log q$, where $q$ is the size of the field over which $E$ and $E'$ are defined; \item[(2)] the size of the largest prime factor of $\#G$;\item[(3)] the largest degree of the fields of definition of $E[\ell^{\lfloor e/2 \rfloor }]$, taken over all prime powers $l^e$ dividing $\#G$.
\end{enumerate} In particular, this result obviates the need for $\Delta_{\mathcal{O}}$ to contain a smooth square factor; instead, one only needs a smooth factor of size greater than $4d$.

\subsection{Level Structure} Many isogeny-based protocols require that some torsion-point information be made publicly known. For example, in SIDH, the image under the secret isogeny $\phi$ of a specified basis $\{P, Q\}$ for $E[m]$ (where $m$ is a power of a small prime coprime to the characteristic of the field $\FF$ over which $E$ is defined) is known. As discussed in the last section, in CSIDH the image of a basis $\{P, Q\}$ for $E[m]$ (with $m$ again a power of a small prime coprime to the field characteristic, but also coprime to the degree $d$ of the secret isogeny $\phi$) is known, up to multiplication by an element of $\left ( \ZZ/m\ZZ \right )^\times$. Equivalently, the image under $\phi$ of two order $m$ subgroups of $E$ is known. Both types of torsion-point information are examples of \textit{level structure} that $\phi$ respects.

\begin{definition} [\cite{Level}]
    Let $E$ be an elliptic curve over a finite field $\FF$ of characteristic $p$ and $m$ be a positive integer coprime to $p$. Let $\Gamma$ be a subgroup of $\GL_2(\ZZ/m\ZZ)$. A \emph{$\Gamma$-level structure of level $m$ on $E$} is a $\Gamma$-orbit of a basis of $E[m]$.
\end{definition}

Typically in the context of isogeny problems, one is not interested in level structure \emph{per se}, but in level structure that a given isogeny $\phi$ respects. That is, given curves $E$ and $E'$ both with $\Gamma$-level $m$ structures for a fixed $\Gamma$, $\phi$ maps the specified $\Gamma$-orbit for $E[m]$ to the specified $\Gamma$-orbit for $E'[m]$. 

There has been much attention paid recently to elliptic curves equipped with a particular level structure. Arpin \cite{Arp24} studies the correspondence of Eichler orders in the quaternion algebra $B_{p, \infty}$ with supersingular elliptic curves over $\overline{\FF}_p$ equipped with \textit{Borel} level structure---i.e., where $\Gamma = \left ( \{\begin{smallmatrix} * & 0 \\ * & *
\end{smallmatrix} \} \right )$---for $m$ squarefree and coprime to $p$. In \cite{CL24}, the authors consider the structure of the supersingular isogeny graph with varying level structures and show that many of these graphs remain Ramanujan.  Others investigate the actions of generalized ideal class groups on elliptic curves over finite fields with level structure \cite{GPV23,ACE+24}. In this paper, we are primarily interested in level structure as a unifying framework for understanding the security of various proposals in isogeny-based cryptography. This framework is described in detail in \cite{Level}. In particular, those authors make explicit the implicit level structures in several schemes including SIDH, M-SIDH, CSIDH, and FESTA, and prove several security reductions between various level structures.

\subsection{Computational assumptions} With regards to computations, we use the word \emph{efficient} to mean polynomial time in the size of the input, which is itself typically a polynomial in $\log m$ (the torsion) and $\log q$ (where $q$ is the cardinality of the field of definition of the $m$-torsion), in our context. Throughout the paper, when we assume that we are given an $\mathcal{O}$-oriented elliptic curve, we mean that we are given an explicit orientation, and in particular that, given an element $\alpha \in \mathcal{O}$, we can compute its action $[\alpha]$ on a point $P$ on $E$ efficiently.  

We assume throughout that the degree of the hidden isogeny is known.

We assume that $m$ is coprime to the characteristic $p$ of the given field $\FF$, and that $m$ is smooth, meaning that its factors are polynomial in size, so that discrete logarithms in $\mu_m$ or $E[m]$ are computable in polynomial time.  In particular, we can efficiently write any element of $E[m]$ in terms of a given basis.

\subsection{The Tate-Lichtenbaum Frey-R\"uck Pairing.}  We review the definition and basic properties of the Tate-Lichtenbaum pairing.

\begin{definition}
\label{defn: tate}
Let $m > 1$ be an integer.  Let $E$ be an elliptic curve defined over a field $\FF$ (assumed finite in this paper).  Suppose that $P \in E(\FF)[m]$.  Choose divisors $D_P$ and $D_Q$ of disjoint support such that $D_P \sim (P) - (\mathcal{O})$ and $D_Q \sim (Q) - (\mathcal{O})$.
Then $mD_P \sim \emptyset$, hence there is a function $f_P$ such that $\operatorname{div}(f_P) = mD_P$.
The Tate-Lichtenbaum pairing
\[
t_m: E(\FF)[m] \times E(\FF)/mE(\FF) \rightarrow \FF^* / (\FF^*)^m
\]
is defined by
\[
t_m(P,Q) = f_P(D_Q).
\]
\end{definition}

The standard properties of the Tate pairing are as follows.  Proofs can be found in many places, for example \cite{Robert} and \cite[Sec 3.2]{attacks}.

\begin{proposition}
	\label{prop:tate}
Definition \ref{defn: tate} is well-defined, and has the following properties:
\begin{enumerate}
\item Bilinearity: for $P,P' \in E(\FF)[m]$ and $Q,Q' \in E(\FF)$
\begin{align*}
t_m(P+P',Q) =& t_m(P,Q)t_m(P',Q), \\
t_m(P,Q+Q') =& t_m(P,Q)t_m(P,Q').
\end{align*}
\item Non-degeneracy: Let $\FF$ be a finite field containing the $m$-th roots of unity $\mu_m$.  For nonzero $P \in E(\FF)[m]$, there exists $Q \in E(\FF)$ such that
\[
t_m(P,Q) \neq 1.
\]
Furthermore, for $Q \in E(\FF) \backslash mE(\FF)$, there exists a $P \in E(\FF)[m]$ such that
\[
t_m(P,Q) \neq 1.
\]
In particular, for $P$ of order $m$, there exists $Q$ such that $t_m(P,Q)$ has order $m$, and similarly for the other entry.
\item Compatibility:  For a point $P \in E(\FF)[m]$, an isogeny $\phi:E \rightarrow E'$, and a point $Q \in E'(\FF)$,
\[
t_m(\phi P, Q) = t_m(P, \widehat{\phi} Q).
\]
\end{enumerate}
\end{proposition}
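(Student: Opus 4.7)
The plan is to verify the four properties in sequence. Well-definedness and non-degeneracy carry the technical weight; bilinearity and compatibility reduce to divisor arithmetic once the right identities are in place.

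For well-definedness, first note that $f_P$ is determined only up to a multiplicative constant, which vanishes when evaluated on the degree-zero divisor $D_Q$. Replacing $D_P$ by $D_P + \operatorname{div}(g)$ replaces $f_P$ by $g^m f_P$ and so changes the output by $g(D_Q)^m \in (\FF^*)^m$. Independence from the choice of $D_Q$ (with disjoint supports arranged by translation if necessary) is the key point: if $D_Q$ is replaced by $D_Q + \operatorname{div}(h)$, the output changes by $f_P(\operatorname{div}(h))$, which by Weil reciprocity equals
\[
h(\operatorname{div}(f_P)) = h(mD_P) = h(D_P)^m.
\]
The same reciprocity shows that $Q \in mE(\FF)$ yields the trivial class: writing $Q = [m]R$, the divisor $D_Q$ equals $m((R)-(\infty))$ modulo principal divisors, and reciprocity absorbs the principal part into an $m$-th power. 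Bilinearity in the first argument then follows because $f_P f_{P'}$ serves as a valid Miller function for $P+P'$, and bilinearity in the second argument is immediate from additivity of divisor evaluation.

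Non-degeneracy is the main obstacle and requires the most external input. My preferred approach is to identify $t_m$ with the Galois-cohomological cup product associated to the Kummer sequence $0 \to E[m] \to E \to E \to 0$; after using Hilbert 90 to identify $H^1(\FF,\mu_m) \cong \FF^*/(\FF^*)^m$, non-degeneracy over a finite field containing $\mu_m$ follows from local Tate duality together with the numerical coincidence $\#E(\FF)[m] = \#(E(\FF)/mE(\FF))$. An elementary alternative compares $t_m$ with the Weil pairing on $E[m^2]$: choosing an $m^2$-lift of $P$ and exploiting non-degeneracy of the Weil pairing furnishes the required witness $Q$.

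For compatibility, I would compute $f_P \circ \widehat{\phi}$ as a function on $E'$. Its divisor equals $\widehat\phi^* \operatorname{div}(f_P) = m\widehat\phi^*((P)-(\infty))$, and the sum-of-fibers identity $\widehat\phi^*((P)-(\infty)) \sim (\phi P)-(\infty)$ (using $\widehat{\widehat\phi} = \phi$ to recover the sum of the fiber points) shows that $f_P \circ \widehat\phi$ agrees with $f_{\phi P}$ up to a scalar and an $m$-th power. Evaluating on $D_Q$ and using $(f_P \circ \widehat\phi)(D_Q) = f_P(\widehat\phi_* D_Q) = f_P(D_{\widehat\phi Q})$ then yields the claimed identity modulo $(\FF^*)^m$.
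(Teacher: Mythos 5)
The paper does not prove this proposition: it explicitly defers to the literature, citing \cite{Robert} and \cite[Sec.\ 3.2]{attacks} for standard proofs. There is therefore no in-paper argument to compare against, and the question is just whether your blind proof is sound. It is, and it follows the same standard route one finds in those references: Weil reciprocity for independence of the choices of $D_P$, $D_Q$, and $f_P$, plus the observation that $f_P f_{P'}$ has divisor $m(D_P+D_{P'})$ (a valid $mD_{P+P'}$) for left-linearity; a pullback/pushforward computation together with $\widehat\phi^*\colon\Pic^0(E)\to\Pic^0(E')$ being $\widehat{\widehat\phi}=\phi$ for compatibility; and either the Kummer/Galois-cohomology cup-product formulation with local duality, or the lift-to-$E[m^2]$ comparison with the Weil pairing, for non-degeneracy over a finite field containing $\mu_m$.

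Two small points worth tightening if you were to write this out fully. First, the definition places the pairing in $\FF^*/(\FF^*)^m$, so you must check rationality: $f_P$ can be taken defined over $\FF$ (since $mD_P$ is $\FF$-rational), and $D_Q$ can be chosen $\FF$-rational, so $f_P(D_Q)\in\FF^*$; your argument as written never mentions this. Second, in the compatibility step the identity you invoke is the pushforward compatibility $f(\widehat\phi_* D)=(f\circ\widehat\phi)(D)$, which holds when supports are disjoint; one should note that the auxiliary translation used to arrange disjoint supports can be chosen simultaneously compatibly on $E$ and $E'$. Neither of these is a gap in the strategy, only in the bookkeeping.
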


\section{Structure of $E[\alpha]$}
\label{sec:structure}

Suppose $E$ has an $\mathcal{O}$-orientation. Let $\alpha \in \mathcal{O}$. We wish to know when $E[\alpha]$ is cyclic as an $\mathcal{O}$-module. The following two theorems of Lenstra are relevant.

\begin{theorem}[{\cite[Proposition 2.1]{Lenstra}}]
	\label{thm:lenstra1}
Let $E$ be an elliptic curve over an algebraically closed field $k$, and $\mathcal{O}$ a subring of $\End_k(E)$ such that as $\ZZ$-modules, $\mathcal{O}$ is free of rank 2 and $\End_k(E)/\mathcal{O}$ is torsion-free. Then for every separable element $\alpha \in \mathcal{O}$, $E[\alpha] \cong \mathcal{O}/\alpha \mathcal{O}$ as $\mathcal{O}$-modules. 
\end{theorem}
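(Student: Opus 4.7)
The plan is to identify both $\mathcal{O}$-modules by order and cyclicity. Both have $N(\alpha)$ elements: $|E[\alpha]| = \deg[\alpha] = N(\alpha)$ by separability of $\alpha$, and $|\mathcal{O}/\alpha\mathcal{O}| = N(\alpha)$ since $\mathcal{O}$ is a free $\ZZ$-module of rank $2$ and the determinant of multiplication by $\alpha$ is $N(\alpha)$. It thus suffices to exhibit a single $P \in E[\alpha]$ that generates it as an $\mathcal{O}$-module: the evaluation map $\mathcal{O} \to E[\alpha]$, $\beta \mapsto [\beta]P$, is then surjective with kernel of index $N(\alpha)$ containing $\alpha\mathcal{O}$, forcing the kernel to equal $\alpha\mathcal{O}$ and yielding an isomorphism $\mathcal{O}/\alpha\mathcal{O} \cong E[\alpha]$.

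To find such a $P$, I would work one rational prime $\ell \mid N(\alpha)$ at a time. Both $E[\alpha]$ and $\mathcal{O}/\alpha\mathcal{O}$ split compatibly as direct sums of their $\ell$-primary parts, and separability of $\alpha$ ensures each such $\ell$ differs from $\operatorname{char}(k)$. Writing $\mathcal{O}_\ell := \mathcal{O} \otimes_\ZZ \ZZ_\ell$, the Tate module $T_\ell E$ is a free $\ZZ_\ell$-module of rank $2$ on which $\mathcal{O}_\ell$ acts faithfully, and it suffices to prove $T_\ell E$ is free of rank one over $\mathcal{O}_\ell$: then the $\ell$-primary part of $E[\alpha]$ is $T_\ell E/\alpha T_\ell E \cong \mathcal{O}_\ell/\alpha\mathcal{O}_\ell$ as $\mathcal{O}_\ell$-modules, and reassembly over primes yields the theorem.

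This freeness is where the torsion-free hypothesis enters. Extending scalars, $T_\ell E \otimes_{\ZZ_\ell} \QQ_\ell$ is a faithful module over the étale $\QQ_\ell$-algebra $K_\ell := K \otimes_\QQ \QQ_\ell$ of $\QQ_\ell$-dimension $2$, hence free of $K_\ell$-rank one, so $T_\ell E$ realizes as a full fractional $\mathcal{O}_\ell$-ideal inside $K_\ell$. Its multiplier order $\{x \in K_\ell : x\, T_\ell E \subseteq T_\ell E\}$ is an order in $K_\ell$ containing $\mathcal{O}_\ell$, and the identity $\mathcal{O} = K \cap \End_k(E)$ (equivalent to the torsion-free assumption), localized at $\ell$, forces this multiplier to equal $\mathcal{O}_\ell$; otherwise a strictly larger multiplier would supply an element of $K \cap \End_k(E)$ outside $\mathcal{O}$. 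Hence $T_\ell E$ is an invertible $\mathcal{O}_\ell$-ideal, and any invertible module over the semilocal ring $\mathcal{O}_\ell$ is free of rank one.

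The main obstacle is the bridging step: promoting a hypothetical extra multiplier in $K_\ell \setminus \mathcal{O}_\ell$, which a priori only acts on the Tate module, to a genuine element of $\End_k(E) \cap K$ so that the saturation hypothesis can be invoked. Once this is justified (e.g.\ via the fact that $\End_k(E) \otimes \ZZ_\ell$ is saturated inside $\End_{\ZZ_\ell}(T_\ell E)$, together with the observation that the multiplier a priori lives in $K$ rather than merely in $K_\ell$), the remaining commutative algebra over $\mathcal{O}_\ell$ is routine.
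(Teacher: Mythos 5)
The paper states this result as a direct citation of \cite[Proposition 2.1]{Lenstra} and supplies no proof of its own, so there is no in-paper argument to compare against; I will assess your proposal on its own merits.

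There is a genuine gap, located in the sentence ``separability of $\alpha$ ensures each such $\ell$ differs from $\operatorname{char}(k)$.'' This is false for ordinary curves in positive characteristic: the Verschiebung $\widehat{\pi}$ on an ordinary $E/\overline{\FF}_p$ is a separable endomorphism of degree $p$, and more generally any $\alpha$ lying in the \'etale prime of $\mathcal{O}$ above $p$ is separable with $p \mid N(\alpha)$. At $\ell = p$ the Tate module $T_pE$ of an ordinary curve has $\ZZ_p$-rank one (it records only the \'etale part of $E[p^\infty]$), so it is not a rank-two lattice in $K \otimes \QQ_p$ and the entire multiplier-ring/invertible-ideal mechanism you set up cannot even be initiated at that prime. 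The conclusion does remain true there, but proving it requires the connected--\'etale decomposition together with the facts that $\mathcal{O} \otimes \ZZ_p \cong \ZZ_p \times \ZZ_p$ (using that the torsion-free hypothesis forces $\mathcal{O} = \End_k(E)$ when $\End_k(E)$ has rank two), that $T_pE$ is supported on the \'etale factor, and that separable $\alpha$ project to a unit on the complementary factor --- a separate argument your outline does not supply. For supersingular $E$ the claim you make is correct, but the theorem is stated for an arbitrary algebraically closed $k$, so the ordinary case must be handled.

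For $\ell \neq \operatorname{char}(k)$ the route is sound, and the ``bridging step'' you flag can indeed be closed along the lines you sketch: every $\ZZ_\ell$-order of $K \otimes \QQ_\ell$ has the form $\mathcal{O}' \otimes \ZZ_\ell$ for an order $\mathcal{O}' \subseteq K$ with $[\mathcal{O}' : \mathcal{O}]$ an $\ell$-power, so a strictly larger multiplier ring yields some $\gamma \in K \setminus \mathcal{O}$ with $\ell^n\gamma \in \mathcal{O}$ acting on $T_\ell E$; the saturation of $\End_k(E)\otimes\ZZ_\ell$ inside $\End_{\ZZ_\ell}(T_\ell E)$ then places $\gamma$ in $\End_k(E)\otimes\ZZ_{(\ell)}$, and combined with $\ell^n\gamma \in \End_k(E)$ this forces $\gamma \in \End_k(E)$, contradicting $K \cap \End_k(E) = \mathcal{O}$. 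So repairing the $\ell = p$ case is the essential outstanding work.
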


When $\alpha$ is inseparable, Lenstra has a similar result. With $\mathcal{O}$ as above, $\text{char } k = p > 0$, and $K = \mathcal{O} \otimes_\ZZ \QQ$, he observes that there is a $p$-adic valuation $\nu$ on $K$ with $\nu(\alpha) = \log(\deg_i \alpha)/\log p$ for $\alpha \in \mathcal{O}$. Following his notation, we define $V = \{x \in K : \nu(x) \geq 0\}$. 

\begin{theorem}[{\cite[Proposition 2.4]{Lenstra}}] 
	\label{thm:lenstra2}
	Let the notation and hypotheses be as above. Then for every non-zero element $\alpha \in \mathcal{O}$ there is an isomorphism of $\mathcal{O}$-modules $E[\alpha] \oplus (V/\alpha V) \cong \mathcal{O}/\alpha \mathcal{O}$.
\end{theorem}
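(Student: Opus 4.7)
The plan is to prove the isomorphism prime-by-prime, decomposing every finite $\mathcal{O}$-module that appears into its $\ell$-primary components for rational primes $\ell$. Since $V$ is a discrete valuation ring with residue characteristic $p$, the module $V/\alpha V$ is a $p$-group, so at primes $\ell \neq p$ it suffices to show $(E[\alpha])_\ell \cong (\mathcal{O}/\alpha\mathcal{O})_\ell$, while at $\ell = p$ one must establish the full identity $(E[\alpha])_p \oplus (V/\alpha V) \cong (\mathcal{O}/\alpha\mathcal{O})_p$.

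For primes $\ell \neq p$, multiplication by $\ell^n$ on $E$ is separable, so Theorem~\ref{thm:lenstra1} applied to $\alpha = \ell^n$ together with passage to the inverse limit gives $T_\ell E \cong \mathcal{O} \otimes \ZZ_\ell$ as $\mathcal{O}\otimes\ZZ_\ell$-modules, hence $E[\ell^\infty] \cong (K \otimes \QQ_\ell)/(\mathcal{O}\otimes\ZZ_\ell)$ as divisible $\mathcal{O}\otimes\ZZ_\ell$-modules. Taking the kernel of multiplication by $\alpha$ on this divisible module yields $(E[\alpha])_\ell \cong \alpha^{-1}(\mathcal{O}\otimes\ZZ_\ell)/(\mathcal{O}\otimes\ZZ_\ell) \cong (\mathcal{O}/\alpha\mathcal{O})_\ell$, which is what is needed.

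For $\ell = p$, I would split on whether $E$ is ordinary or supersingular. In the supersingular case there is a unique prime of $\mathcal{O}$ above $p$, so $V$ is the localization of $\mathcal{O}$ at that prime and $V/\alpha V \cong (\mathcal{O}/\alpha\mathcal{O})_p$; meanwhile $E[p^\infty](\bar{k}) = 0$ forces $(E[\alpha])_p = 0$, and the identity is trivial. In the ordinary case $p$ splits in $K$ and $\mathcal{O}\otimes\ZZ_p \cong \mathcal{O}_{\mathfrak{p}_1} \times \mathcal{O}_{\mathfrak{p}_2}$, with the two primes distinguished by geometry: $\mathfrak{p}_1$ acts through the étale quotient of $E[p^\infty]$, giving $E[p^\infty](\bar{k}) \cong K_{\mathfrak{p}_1}/\mathcal{O}_{\mathfrak{p}_1}$ and hence $(E[\alpha])_p \cong \mathcal{O}_{\mathfrak{p}_1}/\alpha \mathcal{O}_{\mathfrak{p}_1}$, while $\mathfrak{p}_2$ acts through the formal group. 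The valuation $\nu$ on $K$ turns out to be the one associated with $\mathfrak{p}_2$, so $V = \mathcal{O}_{\mathfrak{p}_2}$, and the two $\mathfrak{p}_i$-components of $(\mathcal{O}/\alpha\mathcal{O})_p$ reassemble into $(E[\alpha])_p$ and $V/\alpha V$ respectively.

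The main obstacle is precisely this identification in the ordinary case: verifying that $\nu$ is the valuation at the formal-group prime $\mathfrak{p}_2$, and that $|V/\alpha V| = \deg_i \alpha$. These follow from observing that multiplication by $\alpha$ is étale on the $\mathfrak{p}_1$-component of $E[p^\infty]$ and purely inseparable on the $\mathfrak{p}_2$-component, giving $\deg_i \alpha = |\mathcal{O}_{\mathfrak{p}_2}/\alpha \mathcal{O}_{\mathfrak{p}_2}| = p^{\nu(\alpha)}$ by definition. A small amount of additional care is needed when $\mathcal{O}$ is non-maximal at $p$, but the hypothesis that $\End_k(E)/\mathcal{O}$ is torsion-free sufficiently constrains the conductor so that the same local-decomposition argument goes through.
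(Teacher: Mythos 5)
The paper does not prove this statement: it is imported verbatim as \cite[Proposition 2.4]{Lenstra} and used as a black box, so there is no in-paper proof to compare against. Your sketch is therefore a reconstruction of Lenstra's argument, and it is a reasonable one in outline --- the prime-by-prime reduction, the Tate-module argument away from $p$, and the \'etale--connected dichotomy at $p$ are the natural ingredients. A few places deserve more than a wave of the hand, though.

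First, the passage ``Theorem~\ref{thm:lenstra1} plus inverse limits gives $T_\ell E \cong \mathcal{O}\otimes\ZZ_\ell$'' is not automatic: the isomorphisms $E[\ell^n]\cong \mathcal{O}/\ell^n\mathcal{O}$ are not a priori compatible with the transition maps, and one needs a Nakayama-style argument (finitely generated over the semilocal ring $\mathcal{O}\otimes\ZZ_\ell$, free of rank $1$ modulo the radical, hence free of rank $1$) to conclude. Second, and more substantively, the final paragraph asserts that the torsion-freeness hypothesis ``sufficiently constrains the conductor,'' but the mechanism is different in the two cases and is the crux of the $\ell=p$ analysis. In the ordinary case, $\End_k(E)$ itself has rank $2$, so torsion-freeness forces $\mathcal{O}=\End_k(E)$; that this is automatically maximal at $p$ is a theorem of Deuring, not a triviality, and it is exactly what makes $\mathcal{O}\otimes\ZZ_p \cong \ZZ_p\times\ZZ_p$ and lets the \'etale and formal-group factors line up with $\mathfrak{p}_1$ and $\mathfrak{p}_2$. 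In the supersingular case, the hypothesis says $\mathcal{O}= (K\cap\End_k(E))$, and the claim that this is maximal at $p$ rests on the fact that the unique maximal order in the local quaternion algebra $B_{p,\infty}\otimes\QQ_p$ contains the integral closure of every commutative suborder --- again a real input, not a formality. Without one of these justifications your identification $V/\alpha V \cong (\mathcal{O}/\alpha\mathcal{O})_p$ in the supersingular case (and the corresponding $\mathfrak{p}_2$-component statement in the ordinary case) is not established, since for a non-maximal $\mathcal{O}\otimes\ZZ_p$ the right-hand side need not even be cyclic. If you fill in these two points, the sketch becomes a complete argument.
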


\begin{theorem}
	\label{thm:cyclic}
	Let $E$ be an elliptic curve over $\FF$, $K$ an imaginary quadratic field, and $\mathcal{O}$ an order in $K$ such that $E$ has an $\mathcal{O}$-orientation, which is primitive when extended to $\mathcal{O'}$.  Let $f = [\mathcal{O}':\mathcal{O}]$ be the relative conductor of $\mathcal{O}$. For $\alpha \in \mathcal{O}$ with $N(\alpha)$ coprime to $f$ (i.e., such that the $\mathcal{O}$-orientation is $N(\alpha)$-primitive), then $E[\alpha]$ is cyclic as an $\mathcal{O}$-module. Specifically:

    \begin{enumerate}
        \item If $\alpha$ is separable, then $E[\alpha] \cong \mathcal{O}/\alpha \mathcal{O}$.

        \item If $\alpha$ is inseparable, then $E[\alpha]$ is isomorphic to a proper cyclic $\mathcal{O}$-submodule of $\mathcal{O}/\alpha \mathcal{O}$.
    \end{enumerate}

    As a partial converse, as soon as $\alpha$ factors through multiplication by $n$ for some $n > 1$ that divides $f$, $E[\alpha]$ is not cyclic as an $\mathcal{O}$-module. In particular, if $\alpha = m \in \ZZ$, then $E[m]$ is a cyclic $\mathcal{O}$-module if and only if $m$ and $f$ are coprime. 
    
\end{theorem}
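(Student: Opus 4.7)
The plan is to invoke Lenstra's two theorems above applied to the primitive order $\mathcal{O}'$ and then descend the resulting $\mathcal{O}'$-module structure to an $\mathcal{O}$-module structure using the coprimality of $N(\alpha)$ with the relative conductor $f$. The hypotheses of Theorems~\ref{thm:lenstra1} and~\ref{thm:lenstra2} are satisfied by $\mathcal{O}'$: primitivity forces $\iota(K) \cap \End(E) = \iota(\mathcal{O}')$, so any $\beta \in \End(E)$ with $n\beta \in \iota(\mathcal{O}')$ automatically lies in $\iota(K) \cap \End(E) = \iota(\mathcal{O}')$, making $\End(E)/\iota(\mathcal{O}')$ torsion-free. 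Thus for separable $\alpha$ we obtain $E[\alpha] \cong \mathcal{O}'/\alpha\mathcal{O}'$ as $\mathcal{O}'$-modules, and for inseparable $\alpha$ we obtain $E[\alpha]$ as a direct summand of $\mathcal{O}'/\alpha\mathcal{O}'$ with complement $V/\alpha V$ of size $\deg_i \alpha > 1$.

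The key step is then to show $\mathcal{O}/\alpha\mathcal{O} \cong \mathcal{O}'/\alpha\mathcal{O}'$ as $\mathcal{O}$-modules whenever $\gcd(N(\alpha), f) = 1$. I would consider the $\mathcal{O}$-linear map $\mathcal{O}/\alpha\mathcal{O} \to \mathcal{O}'/\alpha\mathcal{O}'$ induced by the inclusion, and verify injectivity directly: for $\beta = \alpha\gamma \in \alpha\mathcal{O}' \cap \mathcal{O}$ with $\gamma \in \mathcal{O}'$, the containment $f\mathcal{O}' \subseteq \mathcal{O}$ yields $f\gamma \in \mathcal{O}$, so $f\beta \in \alpha\mathcal{O}$. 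Since $\mathcal{O}/\alpha\mathcal{O}$ has order $N(\alpha)$ coprime to $f$, multiplication by $f$ is invertible on it, forcing $\beta \in \alpha\mathcal{O}$. Both groups have order $N(\alpha)$ (the determinant of multiplication by $\alpha$ on any rank-two $\ZZ$-order equals $N(\alpha)$), so the injection is an isomorphism. Because $\mathcal{O}/\alpha\mathcal{O}$ is cyclic as an $\mathcal{O}$-module (generated by $1$), case (1) is immediate; case (2) follows because a direct summand of a cyclic $\mathcal{O}$-module is also a quotient, hence cyclic, while $V/\alpha V \neq 0$ makes the containment proper.

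For the partial converse, suppose $\alpha = n\beta$ with $1 < n \mid f$ and fix a prime $\ell \mid n$. Since $\mathcal{O}' \supseteq \mathcal{O}$ is commutative, the maps $[\beta]$ and $[n/\ell]$ are $\mathcal{O}$-linear. I would argue that $E[\ell]$ is a quotient of $E[\alpha]$ via the $\mathcal{O}$-linear surjections $[\beta] : E[\alpha] \twoheadrightarrow E[n]$ and $[n/\ell] : E[n] \twoheadrightarrow E[\ell]$: surjectivity holds because, given $Q \in E[n]$, any $P$ with $\beta P = Q$ automatically satisfies $\alpha P = nQ = 0$, and similarly for the second map. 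Since quotients of cyclic modules are cyclic, it suffices to show $E[\ell]$ is not $\mathcal{O}$-cyclic. Using $\ZZ$-bases $\{1, \omega'\}$ for $\mathcal{O}'$ and $\{1, f\omega'\}$ for $\mathcal{O}$, the image of $\mathcal{O}$ in $\mathcal{O}'/\ell\mathcal{O}' \cong E[\ell]$ is just $\FF_\ell \cdot 1$ (since $\ell \mid f$ forces $f\omega' \in \ell\mathcal{O}'$), so every $\mathcal{O}$-orbit in $E[\ell]$ has at most $\ell$ elements and cannot fill the size-$\ell^2$ group. The final statement about $E[m]$ for $m \in \ZZ$ combines both directions: $N(m) = m^2$, so $\gcd(N(m), f) = 1$ iff $\gcd(m, f) = 1$, and otherwise any prime $\ell \mid \gcd(m, f)$ triggers the converse.

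The main obstacle is the careful tracking of $\mathcal{O}$-module (as opposed to $\mathcal{O}'$-module) structure through the argument, particularly in the inseparable case where Lenstra supplies only a direct summand rather than a clean isomorphism. A secondary subtlety is ensuring that $V/\alpha V$ is genuinely nontrivial when $\alpha$ is inseparable, so that the containment in case~(2) is indeed proper; this reduces to the standard fact that $\#(V/\alpha V) = \deg_i \alpha$.
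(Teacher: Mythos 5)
Your forward direction mirrors the paper's: apply Lenstra's results at the primitive order $\mathcal{O}'$, then descend via the isomorphism $\mathcal{O}/\alpha\mathcal{O} \cong \mathcal{O}'/\alpha\mathcal{O}'$. The paper cites \cite[Propositions 7.18, 7.20]{cox22} for this; you prove it directly (injectivity because $f$ acts invertibly on $\mathcal{O}/\alpha\mathcal{O}$, surjectivity by a size count), which is a clean self-contained substitute. Your partial converse, however, is a genuinely different argument. The paper works inside $E[\alpha]$ itself, taking an arbitrary point $Q$ of maximal $\ZZ$-order in a basis $\{P,Q\}$ and showing that $[f\sigma]Q$ always has $n$-divisible $P$-coefficient, so no $\mathcal{O}$-combination of $Q$ recovers $P$. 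You instead build $\mathcal{O}$-linear surjections $E[\alpha] \twoheadrightarrow E[n] \twoheadrightarrow E[\ell]$ (using $[\beta]$ and $[n/\ell]$) to reduce to the non-cyclicity of $E[\ell]$, and see that failure immediately from $E[\ell] \cong \mathcal{O}'/\ell\mathcal{O}'$: when $\ell \mid f$, the image of $\mathcal{O}$ in this ring is just $\FF_\ell \cdot 1$, so $\mathcal{O}$-orbits have at most $\ell < \ell^2$ elements. Your reduction is more structural, isolating the obstruction in a minimal quotient, at the cost of a second invocation of Lenstra (now at level $\ell$). One shared caveat: both arguments implicitly require a prime $\ell \mid n$ with $\ell \neq p$ --- you need $\ell$ separable for Lenstra, and the paper needs $E[n]$ of full $\ZZ$-rank for its claim that $n \mid b \mid c$. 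The converse actually fails when $n$ is a $p$-power on an ordinary curve (e.g.\ $\alpha = p$ and $\mathcal{O} = \ZZ + p\mathcal{O}'$ gives $E[p] \cong \ZZ/p\ZZ$, trivially $\mathcal{O}$-cyclic), so the statement tacitly assumes $p \nmid n$; your proof shares this tacit assumption with the paper's and is otherwise correct.
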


\begin{proof}
	Suppose first that $\alpha$ is separable. Let $\iota$ be an $\mathcal{O}$-orientation for $E$ and $\mathcal{O'}$ be the order of $K$ for which $\iota$ is a primitive orientation. Then as an abelian group the quotient $\End(E)/\mathcal{O'}$ is torsion-free and Theorem~\ref{thm:lenstra1} tells us that $E[\alpha] \cong \mathcal{O'}/\alpha \mathcal{O'}$ as $\mathcal{O}'$-modules. We have $N(\alpha) = N(\alpha \mathcal{O'}) = \left | \mathcal{O'}/\alpha \mathcal{O'} \right | $, so since $N(\alpha)$ is coprime to $f$, it follows from \cite[Proposition 7.18, 7.20]{cox22} that the natural injection $\mathcal{O}/\alpha \mathcal{O} \rightarrow \mathcal{O'}/\alpha \mathcal{O'}$ is an isomorphism of $\mathcal{O}$-modules.

	Suppose then that $\alpha$ is inseparable. Let $\mathcal{O'}$ be as above. From Theorem~\ref{thm:lenstra2} we have $E[\alpha] \oplus V/\alpha V \cong \mathcal{O'}/\alpha \mathcal{O'}$, so $E[\alpha]$ is isomorphic as an $\mathcal{O'}$-module to $(\mathcal{O'}/\alpha \mathcal{O'})/(V/\alpha V)$ and hence is a cyclic $\mathcal{O'}$-module. Since  $\mathcal{O'}/\alpha \mathcal{O'} \cong \mathcal{O}/\alpha \mathcal{O}$ as $\mathcal{O}$-modules, again by our assumption that $N(\alpha)$ is coprime to $f$, it follows that $E[\alpha]$ is cyclic as an $\mathcal{O}$-module. 
    
	Finally, suppose $\alpha$ factors through $[n]$ for some $n > 1$ with $n \mid f$. Then as a $\ZZ$-module, $E[\alpha] \cong \ZZ/b\ZZ \times \ZZ/c\ZZ$ with $n \mid b \mid c$. Let $Q$ be an arbitrary point of order $c$ in $E[\alpha]$ and extend to a generating set $\{P, Q\}$ for $E[\alpha]$ with $\operatorname{ord}(P) = b, \operatorname{ord}(Q) = c$. Let $\mathcal{O'} = \ZZ[\sigma]$ for some $\sigma$.  Then $\mathcal{O} = \ZZ[f\sigma]$. Since any element of $\mathcal{O}$ is a $\ZZ$-linear combination of $[1]$ and $[f\sigma]$, whether or not $E[\alpha]$ is cyclic as an $\mathcal{O}$-module is determined by the action of $f \sigma$. We have 
$[f\sigma]Q 
 = [n\sigma]Q' $,
 where $Q' = [f/n]Q$. 
 
 If $[\sigma]Q' = [s]P + [t]Q$, then $[f\sigma] Q = [ns]P + [nt]Q$, and we cannot obtain $P$ from the action of any $\ZZ$-linear combination of $[1]$ and $[f\sigma]$ on $Q$ (since $[n]$ is not injective on $E[\alpha]$). Thus, $Q$ cannot be a generator for $E[\alpha]$ as an $\mathcal{O}$-module. Since $Q$ was an arbitrary order $c$ point, and since no point of order strictly less than $c$ can generate $E[\alpha]$ as an $\mathcal{O}$-module (endomorphisms send points of $E$ of order $m$ to points of $E$ of order dividing $m$), $E[\alpha]$ cannot be a cyclic $\mathcal{O}$-module. \qed
\end{proof}

\begin{example}
	Consider the ordinary curve $y^2 = x^3 + 30x + 2$ over $\FF_{101}$. Denoting the Frobenius endomorphism of degree $p$ by $\pi$, $\ZZ[\pi]$ has conductor 2 in the maximal order and $[\ZZ[\pi] : \ZZ[\pi^2]] =18$.  Thus, Theorem~\ref{thm:cyclic} implies $E[3]$ is not cyclic as a $\ZZ[\pi^2]$-module. Indeed, making a base change to $\FF_{101^2}$, the 3-torsion of $E$ becomes rational. On the other hand, $E[3]$ is cyclic as a $\ZZ[\pi]$-module. With $\FF_{101^2} = \FF_{101}(a)$ and $x^2 -4x + 2$ the minimal polynomial of $a$, we have $P = (41a + 16, 39a + 19) \in E[3]$ and $\pi(P) = (60a + 79, 62a + 74) \not\in \langle P \rangle$, hence $\ZZ[\pi] P = E[3]$.
\end{example}

\section{Sesquilinear pairings}
\label{sec:sesqui}

We follow \cite{katepairing} in this section.
Suppose $\mathcal{O} = \ZZ[\tau]$ is an imaginary quadratic order.
Let $E$ have CM by $\mathcal{O}$.  Let
$\rho: \mathcal{O} \rightarrow M_{2 \times 2}(\ZZ)$
be the left-regular representation of $\mathcal{O}$ acting on the basis $1$ and $\tau$, i.e. 
\[
\rho(\alpha) = \begin{pmatrix} a & b \\ c & d \end{pmatrix} \quad \iff \quad \alpha = a + c\tau, \quad \alpha \tau = b + d\tau.
\]
Then we endow the Cartesian square $(\FF^*)^{\times 2}$ of the multiplicative $\ZZ$-module $\FF^*$ (i.e. $\ZZ$-coefficients in the exponent) with a multiplicative $\mathcal{O}$-module action (i.e. $\mathcal{O}$-coefficients in the exponent) via 
\begin{equation}
    \label{eqn:action}
(x,y)^\alpha = \rho(\alpha) \cdot (x,y) = (x^ay^b,x^cy^d), \quad \text{where} \quad \rho(\alpha) = \begin{pmatrix} a & b \\ c & d \end{pmatrix}.
\end{equation}

In the case of an $\mathcal{O}$-module, by \emph{order} of an element we mean the $\ZZ$-order; we can also discuss the annihilator as an $\mathcal{O}$-module, which may be distinct from this.

For each $\alpha \in \mathcal{O}$, we define a bilinear pairing
\[
	\widehat{T}^{\tau}_\alpha : E[\overline\alpha](\FF) \times E(\FF) / [\alpha]E(\FF) \rightarrow (\FF^*)^{\times 2} / ((\FF^*)^{\times 2})^{\alpha}
\]
as follows.  Write
\[
\rho({\alpha}) = \begin{pmatrix} a & b \\ c & d \end{pmatrix}, \quad
\rho(\overline{\alpha}) = \begin{pmatrix} d & -b \\ -c & a \end{pmatrix}.
\]
Observe that this corresponds to the ring facts
\[
a + c\tau = \alpha, \quad b + d\tau = \alpha \tau, \quad
d - c\tau = \overline{\alpha}, \quad -b + a\tau = \overline{\alpha}\tau.
\]
We take $P \in E[\overline{\alpha}]$, 
Define $f_P = (f_{P,1}, f_{P,2})$, where
\begin{align*}
    \label{eqn:hpgp-2}
    \div(f_{P,1}) &= a([-\tau]P) + b(P)-(a+b)(\infty), \\
    \div(f_{P,2}) &= c
([-\tau]P) + d(P) - (c+d)(\infty).
\end{align*}
Choose an auxiliary point $R \in E(\overline\FF)$ and define for $Q \in E(\FF)$,
\[
D_{Q,1} = ([-\tau]Q + [-\tau]R) - ([-\tau]R), \quad
D_{Q,2} = (Q + R) - (R).
\]
Then, choosing $R$ so that the necessary supports are disjoint (i.e. the support of $\div(f_{P,i})$ and $D_{Q,j}$ are disjoint for each pair $i$, $j$), the pairing is defined (using \eqref{eqn:action}) as
\[
\widehat{T}^{\tau}_\alpha(P,Q) := \left(
f_{P,1}(D_{Q,1}), f_{P,2}(D_{Q,1})
\right)
\left(
f_{P,1}(D_{Q,2}), f_{P,2}(D_{Q,2})
\right)^{\overline\tau}
\]
which can also be expressed as
\[
\left(
f_{P,1}(D_{Q,1})f_{P,1}(D_{Q,2})^{Tr(\tau)}f_{P,2}(D_{Q,2})^{N(\tau)}, f_{P,2}(D_{Q,1})f_{P,1}(D_{Q,2})^{-1}
\right).
\]

\begin{remark}
In \cite{katepairing}, 
it is shown how it is possible to think of these definitions as elements of $\mathcal{O} \otimes_\ZZ \Pic^0(E)$:
\[
D_Q = D_{Q,1} + \tau \cdot D_{Q,2}, \quad D_P = ([-\tau]P)-(\infty) + \tau \cdot ( (P) - (\infty) ),
\]
and analogously define $f_P$ satisfying $\div(f_P) = {\alpha} \cdot D_P$, so that the definition above has the form $f_P(D_Q)$ as for the classical Tate pairing, and the apparent dependence on the basis $1$, $\tau$ disappears.  For simplicity here, we stick to the direct definition above.  In that same paper, analogous constructions are also given for quaternion orders and Weil-like pairings.
\end{remark}

\begin{theorem}[{\cite[Theorems 5.4, 5.5, 5.6]{katepairing}}]
\label{thm:tate-sesqui}
The pairing above is well-defined and satisfies
    \begin{enumerate}
        \item Sesquilinearity:  For $P \in E[\overline\alpha](\FF)$ and $Q \in E(\FF)$,    \[
    \widehat{T}^{\tau}_\alpha([\gamma] P, [\delta] Q)
    = \widehat{T}^{\tau}_\alpha(P,Q)^{{\overline\gamma}{\delta}}.
    \]
        \item Compatibility:   
            Let $\phi: E \rightarrow E'$ be an isogeny between curves with CM by $\mathcal{O}$ and satisfying $[\alpha] \circ \phi = \phi \circ [\alpha]$.   Then for $P \in E[\overline\alpha](\FF)$ and $Q \in E(\FF)$,
    \[
    \widehat{T}^{\tau}_\alpha(\phi P,\phi Q) = \widehat{T}^{\tau}_\alpha(P,Q)^{\deg \phi}.
    \]
    \item Coherence:
	    Suppose $P \in E[\overline{\alpha \beta}](\FF)$, and $Q \in E(\FF)/ [\alpha\beta] E(\FF)$.  Then
    \[
    \widehat{T}^{\tau}_{\alpha \beta}(P, Q) \bmod{ ((\FF^*)^{\times 2})^{\beta}}
    = \widehat{T}^{\tau}_{\beta}( [\overline\alpha]P, Q \bmod [{\beta}]E ).
    \]
    Suppose $P \in E[\overline{\alpha}](\FF)$, and $Q \in E(\FF)/ [\alpha\beta] E(\FF)$.  Then
    \[
    \widehat{T}^{\tau}_{\alpha \beta}(P, Q) \bmod{ ((\FF^*)^{\times 2})^{\alpha}}
    = \widehat{T}^{\tau}_{\alpha}( P, [{\beta}]Q \bmod [{\alpha}]E ).
    \]
        \item \label{item:nondeg} Non-degeneracy: 
    Let $\FF$ be a finite field, and let $E$ be an elliptic curve defined over $\FF$.  Let $\alpha \in \mathcal{O}$ be coprime to $char(\FF)$ and the discriminant of $\mathcal{O}$.  Let $N = N(\alpha)$.  Suppose $\FF$ contains the $N$-th roots of unity.  
     Suppose there exists $P \in E[N](\FF)$ such that $\mathcal{O} P = E[N] = E[N](\FF)$.
    Then 
    \[
\widehat{T}^{\tau}_\alpha : E[\overline{\alpha}](\FF) \times E(\FF) / [\alpha] E(\FF) \rightarrow (\FF^*)^{\times 2} /  ((\FF^*)^{\times 2})^{\alpha},
\] is non-degenerate.    Furthermore, if $P$ has annihilator $\overline\alpha \mathcal{O}$, then $T_\alpha(P, \cdot)$ is surjective; and if $Q$ has annihilator $ \alpha \mathcal{O}$, then  $T_\alpha(\cdot, Q)$ is surjective.
\item \label{item:tn}
Let $t_n$ be the $n$-Tate-Lichtenbaum pairing as described in Section~\ref{sec:back}.  Then
    \[
     \widehat{T}^{\tau}_n (P,Q) = 
   \left( 
	   t_n(P,Q)^{2N(\tau)}
	   t_n([-\tau]P,Q)^{Tr(\tau)}, \;
	   t_n([\tau - \overline{\tau}]P,Q)
    \right).
    \]
    \item \label{item:talpha}  Provided both of the following quantities are defined,
    \[
    \widehat{T}^{\tau}_{N(\alpha)}(P,Q) = \widehat{T}^{\tau}_\alpha(P,Q)^{\overline\alpha} \pmod{((\FF^*)^{\times 2})^{\alpha}}
    \]
\end{enumerate}
\end{theorem}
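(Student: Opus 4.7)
The plan is to derive (6) as a direct consequence of the coherence identity in item (3) combined with sesquilinearity (item (1)). The key observation is that in the imaginary quadratic order $\mathcal{O}$ we have $N(\alpha) = \alpha \overline{\alpha}$, so the quantity $\widehat{T}^{\tau}_{N(\alpha)}$ on the left-hand side admits the factorization $\widehat{T}^{\tau}_{\alpha \cdot \overline{\alpha}}$, which is precisely the form to which the coherence identities apply.

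First I would note that, since $P \in E[\overline{\alpha}](\FF)$ and $\overline{\alpha}$ divides $N(\alpha) = \overline{N(\alpha)}$ in $\mathcal{O}$, we also have $P \in E[N(\alpha)](\FF)$, so both sides of the claimed identity are defined. I then apply the second form of coherence from item (3) with the product $\alpha\beta$ of that statement played by $\alpha \cdot \overline{\alpha}$; the hypothesis there is $P \in E[\overline{\alpha}](\FF)$, which is exactly what we are given, and the identity yields
\[
\widehat{T}^{\tau}_{N(\alpha)}(P, Q) \bmod ((\FF^*)^{\times 2})^{\alpha} = \widehat{T}^{\tau}_{\alpha}\bigl(P, [\overline{\alpha}]Q \bmod [\alpha] E\bigr).
\]
Next I invoke sesquilinearity (item (1)) with $\gamma = 1$ and $\delta = \overline{\alpha}$ to get $\widehat{T}^{\tau}_{\alpha}(P, [\overline{\alpha}]Q) = \widehat{T}^{\tau}_{\alpha}(P, Q)^{\overline{\alpha}}$, and combining the two equalities produces the claimed identity.

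I do not expect a real obstacle: the argument is essentially a two-line manipulation once $N(\alpha) = \alpha\overline{\alpha}$ is written down. The one subtlety worth noting is that the target module $(\FF^*)^{\times 2} / ((\FF^*)^{\times 2})^{\alpha}$ carries a well-defined $\mathcal{O}$-module structure (commutativity of $\mathcal{O}$ ensures that $((\FF^*)^{\times 2})^{\alpha}$ is stable under the $\mathcal{O}$-action), so raising the right-hand side to the power $\overline{\alpha}$ is meaningful in this quotient and the equation is an honest equality there. No further hypotheses beyond those implicit in the definedness of both sides are needed, which matches the theorem's proviso that both quantities make sense.
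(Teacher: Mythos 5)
The statement Theorem~\ref{thm:tate-sesqui} is cited directly from \cite{katepairing} and not proved in this paper, so there is no in-paper argument to compare your proposal against. Your proposal addresses only item (6) of the six items, treating items (1)--(5) as given. For item (6), the derivation you give is correct: applying the second coherence identity from item (3) with $\beta = \overline{\alpha}$ yields
\[
\widehat{T}^{\tau}_{N(\alpha)}(P,Q) \bmod ((\FF^*)^{\times 2})^{\alpha} = \widehat{T}^{\tau}_{\alpha}\bigl(P, [\overline{\alpha}]Q \bmod [\alpha]E\bigr),
\]
and sesquilinearity with $\gamma = 1$, $\delta = \overline{\alpha}$ converts the right-hand side to $\widehat{T}^{\tau}_{\alpha}(P,Q)^{\overline{\alpha}}$. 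Your hypothesis check (that $P \in E[\overline{\alpha}]$ implies $P$ also lies in the domain of $\widehat{T}^{\tau}_{N(\alpha)}$, since $\overline{\alpha} \mid N(\alpha)$) is correct, and the remark about the quotient module being a well-defined $\mathcal{O}$-module is a reasonable point of care. However, as a proof of the theorem as stated, the proposal is incomplete: items (1)--(5) --- well-definedness, sesquilinearity, compatibility, coherence, non-degeneracy, and the comparison formula with the classical Tate pairing --- constitute the bulk of the content and require direct arguments from the divisor-level definition (Weil reciprocity, the structure of $E[\alpha]$, etc.), none of which appear here. Item (6) is essentially the only item that follows formally from the others, which you identified, but that does not discharge the rest of the theorem.
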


\begin{theorem}
\label{thm:compute-pair}
	Provided the action of $\tau$ is efficiently computable, then the pairing $\widehat{T}_\alpha^\tau(P,Q)$ is efficiently computable.  That is, it takes polynomially many operations in the field of definition of $P$ and $Q$.
\end{theorem}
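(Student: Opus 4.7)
The plan is to express $\widehat{T}_\alpha^\tau(P,Q)$ as a product of a bounded number of evaluations $f_{P,i}(D_{Q,j})$, compute each via a Miller-style algorithm, and combine them according to the formula in the definition. Since the final combination step uses only multiplication and exponentiation by the fixed integers $N(\tau)$ and $\operatorname{Tr}(\tau)$, the real work reduces to computing the four quantities $f_{P,i}(D_{Q,j})$ for $i, j \in \{1, 2\}$.

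The central step I would use is to factor each $f_{P,i}$, up to a nonzero constant, as a product of two classical Miller functions and one vertical line. Writing $a + c\tau = \alpha$ and $b + d\tau = \alpha\tau$ and using $P \in E[\overline{\alpha}]$, one computes $[a][-\tau]P + [b]P = [-\overline{\alpha}\tau]P = \infty$, so $S := [a][-\tau]P$ and $-S = [b]P$ are additive inverses. Letting $f_{n,T}$ denote the standard Miller function with divisor $n(T) - ([n]T) - (n-1)(\infty)$ and $\ell_S$ the vertical line through $S$, a divisor comparison should yield
\[
    f_{P,1} = f_{a,\,[-\tau]P}\cdot f_{b,\,P}\cdot \ell_S,
\]
and analogously for $f_{P,2}$ with the coefficients $c, d$. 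Each Miller function is evaluable at a point in $O(\log n)$ curve operations via double-and-add, and any overall scalar ambiguity in these identities cancels upon evaluation at the degree-zero divisors $D_{Q,j}$.

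Next I would observe that evaluating at $D_{Q,1}$ and $D_{Q,2}$ requires only the auxiliary points $R$, $[-\tau]R$, $Q + R$, $[-\tau]Q + [-\tau]R$, all computable in polynomial time by the hypothesis on $\tau$; the auxiliary $R$ is chosen (with retry on collision) so that all required supports are disjoint. The coefficients $a, b, c, d$ satisfy $b = -cN(\tau)$ and $d = a + c\operatorname{Tr}(\tau)$, with $|a|, |c| = O(\sqrt{N(\alpha)})$, so they have bit-length $O(\log N(\alpha))$ and every Miller loop terminates in polynomially many steps in $\log N(\alpha)$.

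The hard part will be the bookkeeping: handling negative signs of $a, b, c, d$ inside the Miller iterations (via $f_{-n, T} = f_{n, T}^{-1}$ together with appropriate line corrections), handling degenerate cases where an intermediate doubling in the Miller loop hits the point at infinity, and verifying that the scalar ambiguities in the factorizations above really do cancel after evaluation on the degree-zero divisors $D_{Q,j}$. All of these are standard ingredients of Miller-type pairing algorithms and should not introduce a genuinely new obstacle.
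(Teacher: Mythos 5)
Your proposal is correct and is a fully worked-out version of exactly what the paper defers to: the paper's proof of this theorem is a one-line appeal to a ``Miller-style pairing algorithm'' with the details relegated to a citation of the pairing reference. Your divisor bookkeeping is sound (the identity $[a][-\tau]P + [b]P = [-\tau\overline{\alpha}]P = \infty$ does follow from $-a\tau + b = -\tau\overline{\alpha}$, and the analogous identity $-c\tau+d=\overline{\alpha}$ handles $f_{P,2}$), so this is the same approach.
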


\begin{proof}
	This follows from the definition given above, which is amenable to a Miller-style pairing algorithm; details are in \cite[Algorithm 5.7]{katepairing}.\qed
\end{proof}

To use the pairings $\widehat{T}^{\tau}_\alpha$, the most expedient computation method is the formulas given in Theorem~\ref{thm:tate-sesqui} items \eqref{item:tn} and \eqref{item:talpha}.  In particular, in our applications of $\widehat{T}^{\tau}_\alpha$ to form a discrete logarithm problem, in most use cases it suffices to compute $\widehat{T}^{\tau}_\alpha(P,Q)^{\overline\alpha}$ instead.  But if one wishes, one can compute $\overline\alpha^{-1} \pmod{\alpha}$ (provided $\alpha$ and $\overline\alpha$ are coprime), and use
    \[
	    \widehat{T}^{\tau}_\alpha(P,Q) = \widehat{T}^{\tau}_{N(\alpha)}(P,Q)^{\overline\alpha^{-1}} \pmod{((\FF^*)^{\times 2})^{\alpha}}.
    \]
    This may not apply when $\alpha$ divides the discriminant.

    \begin{definition}
	    \label{def:red}
    Also for cryptographic applications, it is convenient to apply a final exponentiation to obtain a \emph{reduced pairing}, as is common with the classical Tate pairing.  This will move the pairing into the roots of unity:
    \[
	    (\overline \FF^*)/(\overline \FF^*)^\alpha \rightarrow \mu_{N(\alpha)}^{\times 2} \subseteq (\overline \FF^*)^{\times 2}, \quad
	    x \mapsto x^{(q-1)\alpha^{-1}}.
    \] 
    \end{definition}

\begin{lemma}
	\label{lem:squares}
	Consider the image $N(\mathcal{O})$ of $\mathcal{O}$ under the norm map.  Then $N(\mathcal{O})$ modulo $m > 2$ is a subset of $\{ x^2 : x \in \ZZ/m\ZZ \}$ only if $m$ and $\Delta_\mathcal{O}$ share a non-trivial factor. 
\end{lemma}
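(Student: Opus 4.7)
The plan is to prove the contrapositive: if $\gcd(m,\Delta_\mathcal{O})=1$ with $m>2$, then some $\alpha \in \mathcal{O}$ has $N(\alpha)$ a non-square mod $m$. I would reduce this to finding $\alpha$ with $N(\alpha)$ a non-square modulo some divisor $d>1$ of $m$, since a square mod $m$ is automatically a square mod every divisor. Write $\mathcal{O} = \ZZ[\tau]$ with $\tau$ satisfying $\tau^2 - t\tau + n = 0$, so $\Delta_\mathcal{O} = t^2 - 4n$ and the norm form is $Q(a,b) := N(a + b\tau) = a^2 + tab + nb^2$, with the key completed-square identity $4Q(a,b) = (2a+tb)^2 - \Delta_\mathcal{O}\, b^2$.

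Case 1: $m$ has an odd prime factor $p$. Then $p \nmid \Delta_\mathcal{O}$ by hypothesis. Over $\FF_p$, the substitution $(u,v) = (2a+tb,\,b)$ is invertible (since $2$ is a unit), so the image of $4Q$ on $\FF_p^2$ equals the image of $u^2 - \Delta_\mathcal{O} v^2$. This is a non-degenerate binary quadratic form over $\FF_p$: if $\Delta_\mathcal{O}$ is a square mod $p$ it is hyperbolic and represents all of $\FF_p$; if not, it is the norm form of $\FF_{p^2}/\FF_p$ and represents all of $\FF_p^*$. Either way it attains non-square values, and since $4$ is a nonzero square in $\FF_p$, so does $Q$. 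The resulting $\alpha$ has $N(\alpha)$ a non-square mod $p$, and therefore mod $m$.

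Case 2: $m$ is a pure power of $2$, so $m \geq 4$ because $m > 2$. The coprimality forces $\Delta_\mathcal{O}$ odd; since $\Delta_\mathcal{O} = t^2 - 4n$, this in turn forces $t$ odd. The explicit choice $\alpha = 1 + 2\tau$ then has $N(\alpha) = 1 + 2t + 4n \equiv 3 \pmod 4$, which is not a square modulo $4$, hence not a square modulo $m$. The main obstacle is precisely this $p=2$ case: the quadratic-form reduction of Case 1 relies on inverting $2$, so at the prime $2$ one must produce an explicit witness. The hypothesis $m>2$ is exactly what is needed to guarantee $4 \mid m$ when $m$ is a pure power of $2$, so that non-squareness mod $4$ suffices; for $m=2$ every element of $\ZZ/2\ZZ$ is a square, which is why the lemma excludes that degenerate case.
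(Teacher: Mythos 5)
Your proof is correct, and it takes a route that is parallel in spirit to the paper's but more explicit and, in one spot, more careful. The paper's proof reduces by CRT to the prime-power case and then argues $p$-adically: for $p$ split the norm map from $\ZZ_p \otimes_\ZZ \mathcal{O}$ to $\ZZ_p$ is surjective, and for $p$ inert it invokes surjectivity of the norm on the residue field. Your version replaces the $p$-adic language by the completed-square identity $4Q(a,b) = (2a+tb)^2 - \Delta_\mathcal{O} b^2$, which is the same dichotomy (hyperbolic versus anisotropic norm form according to whether $\Delta_\mathcal{O}$ is a square mod $p$) phrased over $\FF_p$ directly; this makes the odd-prime case self-contained and elementary. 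The genuine improvement is at $p = 2$: the paper's appeal to ``surjective on the residue field'' does not literally produce a non-square there, since $\FF_2$ has none, and one needs to look at units modulo $4$ or $8$; your explicit witness $\alpha = 1 + 2\tau$ with $N(\alpha) \equiv 3 \pmod{4}$ closes exactly this point, and also explains cleanly why the hypothesis $m > 2$ is needed. Your reduction ``non-square mod a divisor $d > 1$ of $m$ implies non-square mod $m$'' is a slightly different packaging than the CRT reduction but does the same job.
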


\begin{proof}
	We may assume, by Sunzi's Theorem (Chinese Remainder Theorem), that $m$ is a prime power $p^k > 2$.
	If $p$ is split, the statement follows from the fact that the norm map from $\QQ_p\otimes_\ZZ\mathcal{O}$ to $\QQ_p$ is surjective.  If $p$ is inert, the norm map 
 is surjective on the residue field, so $N(\mathcal{O})$ modulo $p^k$ does include non-squares. 
\qed
\end{proof}

 \begin{theorem}
	    \label{thm:order}
	    Let $E$ be an elliptic curve oriented by $\mathcal{O} = \ZZ[\tau]$.  Let $m$ be coprime to the discriminant $\Delta_{\mathcal{O}}$.
	    Let $\FF$ be a finite field containing the $m$-th roots of unity.  Suppose $E[m] = E[m](\FF)$.  
	    Let $P$ have order $m$.
	    Let $s$ be the maximal divisor of $m$ such that $E[s] \subseteq \mathcal{O}P$.
	    Then the multiplicative order $m'$ of $\widehat{T}_m^\tau(P,P)$ satisfies $s \mid m' \mid 2s^2$.
	    In particular, if $\mathcal{O}P = E[m]$, then $s=m$ and the self-pairing has order $m$.  If $\mathcal{O}P = \ZZ P$, then $s=1$, and in fact, in this case, the self-pairing is trivial.
    \end{theorem}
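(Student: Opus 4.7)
Since $m$ is coprime to $\Delta_\mathcal{O}$ and thus to the relative conductor, Theorem~\ref{thm:cyclic} gives $E[m] \cong \mathcal{O}/m\mathcal{O}$ as $\mathcal{O}$-modules. I would fix such an identification, sending $P$ to $x \in \mathcal{O}/m\mathcal{O}$, so that $\mathcal{O}P$ becomes the principal ideal $(x)$. A prime-by-prime local analysis then identifies $s = m/\gcd(m, N(x))$: at an inert prime $\ell \mid m$ the only principal ideal of $\ZZ$-exponent $\ell^k$ in $\mathcal{O}/\ell^k\mathcal{O}$ is the unit ideal; at a split prime $\ell = \mathfrak{l}\overline{\mathfrak{l}}$, the condition that $P$ have order $\ell^k$ forces $x$ to have either $\mathfrak{l}$- or $\overline{\mathfrak{l}}$-valuation zero, and the maximal $s$ with $E[s] \subseteq (x)$ is then read off from these valuations.

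For the upper bound I would use sesquilinearity (Property~1 of Theorem~\ref{thm:tate-sesqui}). Setting $\mathfrak{a} := \Ann_\mathcal{O}(P)$, for $\beta \in \mathfrak{a}$ one has $\widehat{T}_m^\tau(P,P)^\beta = \widehat{T}_m^\tau(P, [\beta]P) = 1$, and for $\beta \in \overline{\mathfrak{a}}$ one has $\widehat{T}_m^\tau(P,P)^\beta = \widehat{T}_m^\tau([\overline\beta]P, P) = 1$. Hence $\mathfrak{a} + \overline{\mathfrak{a}}$ annihilates $\widehat{T}_m^\tau(P,P)$ as an element of the $\mathcal{O}$-module target, so $m'$ divides the smallest positive integer in that ideal. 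A direct local check (using the split-case identity $\mathfrak{l}^a \overline{\mathfrak{l}}^b + \overline{\mathfrak{l}}^a \mathfrak{l}^b = (\ell^{\min(a,b)})$) shows this integer equals $s$, yielding $m' \mid s$ and hence $m' \mid 2s^2$.

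For the lower bound I would classify the sesquilinear pairing. After identifying $E[m]$, $E(\FF)/[m]E(\FF)$, and the target all with $\mathcal{O}/m\mathcal{O}$, any $\mathcal{O}$-sesquilinear form has the shape $(P,P') \mapsto c \cdot \overline{x(P)} \cdot x(P')$, and non-degeneracy (item~\ref{item:nondeg}, applied to a generator of $E[m]$) forces $c \in (\mathcal{O}/m\mathcal{O})^\times$. Then $\widehat{T}_m^\tau(P,P) = c \cdot N(x)$ has $\ZZ$-order $m/\gcd(m, N(x)) = s$, which gives $s \mid m'$ (and in fact the sharp equality $m' = s$). The two extreme cases drop out: $\mathcal{O}P = E[m]$ forces $x \in (\mathcal{O}/m\mathcal{O})^\times$, so $s = m$; while $\mathcal{O}P = \ZZ P$ forces each local $x$ onto a coordinate axis at split primes (and prohibits inert primes from dividing $m$), so $N(x) \equiv 0 \pmod m$, giving $s = 1$ and trivial self-pairing. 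The main obstacle is justifying the classification, which requires $E(\FF)/[m]E(\FF)$ to also be $\mathcal{O}$-cyclic; a safer fallback is to pick a generator $Q = [\gamma]P$ of the cyclic $\mathcal{O}$-module $E[s] \subseteq \mathcal{O}P$, observe $\widehat{T}_m^\tau(Q,Q) = \widehat{T}_m^\tau(P,P)^{N(\gamma)}$ by sesquilinearity, and use coherence (Property~3) to reduce this modulo $((\FF^*)^{\times 2})^s$ to a $\widehat{T}_s^\tau$-self-pairing whose $\ZZ$-order $s$ is pinned down by non-degeneracy at level $s$.
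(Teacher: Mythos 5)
Your upper bound argument is correct, genuinely different from the paper's, and in fact sharper. The paper fixes $\lambda \in \mathcal{O}$, writes $[\lambda s]P = [ks]P$, equates $\widehat{T}_m^\tau([s]P,[s]P)^{k^2} = \widehat{T}_m^\tau([s]P,[s]P)^{N(\lambda)}$, and invokes Lemma~\ref{lem:squares} to bound the order of $\widehat{T}_m^\tau([s]P,[s]P)$ by $2$, yielding only $m' \mid 2s^2$; for $s=1$ it then has to cite \cite[Prop.\ 4.8]{attacks} separately to get triviality. Your annihilator computation instead shows directly that $\mathfrak{a}+\overline{\mathfrak{a}}$ kills $\widehat{T}_m^\tau(P,P)$ in the target module, and a local computation identifies $(\mathfrak{a}+\overline{\mathfrak{a}})\cap\ZZ = s\ZZ$, giving $m'\mid s$ cleanly in all cases including $s=1$. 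That is a tidier and stronger upper bound than the stated $m'\mid 2s^2$.

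The lower bound is where the proposal has real gaps. The classification route asserts without proof that any $\mathcal{O}$-sesquilinear form on $\mathcal{O}/m\mathcal{O}$ has the shape $(x,x')\mapsto c\,\overline{x}x'$, and also needs $E(\FF)/[m]E(\FF)$ identified $\mathcal{O}$-equivariantly with $\mathcal{O}/m\mathcal{O}$; you flag this yourself, so I will focus on the fallback. That fallback does not go through as written. First, applying coherence with $\alpha\beta=m$, $\beta=s$ gives $\widehat{T}_m^\tau(Q,Q)\bmod((\FF^*)^{\times 2})^s = \widehat{T}_s^\tau([m/s]Q,\,Q\bmod[s]E)$, and $[m/s]Q$ vanishes whenever $s\mid m/s$ (e.g.\ $m=\ell^{2}$, $s=\ell$), so the reduction trivializes exactly in the cases you would want it to help. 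Second, and more fundamentally, non-degeneracy of $\widehat{T}_s^\tau$ (Theorem~\ref{thm:tate-sesqui}~\eqref{item:nondeg}) only guarantees the existence of \emph{some} $R$ with $\widehat{T}_s^\tau(Q,R)$ of full order; it says nothing about the \emph{self}-pairing $\widehat{T}_s^\tau(Q,Q)$, which is precisely what you need and precisely what the theorem is trying to establish. The paper's lower bound avoids this circularity: it picks $Q$ with $\widehat{T}_m^\tau(P,Q)$ of order $m$ by non-degeneracy, chooses a complementary basis vector $P'$ with $[m/s]P'=[\lambda]P$, writes $Q=[a]P+[b]P'$, and computes $\widehat{T}_m^\tau(P,Q)^{m/s}=\widehat{T}_m^\tau(P,P)^{(m/s)a+b\lambda}$, whose left side has order $s$ -- forcing $s\mid m'$. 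You should replace the lower-bound portion of your argument with something along those lines; your upper-bound argument can stay as is and is an improvement.
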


    To rephrase the last sentence, the self-pairing is trivial on the eigenspaces for the action of $\mathcal{O}$ on $E[m]$.  This observation by itself is a consequence of the classification of self-pairings in \cite{attacks}.

    \begin{proof}
	    Let $m' \mid m$ be the order of $\widehat{T}_m^\tau(P,P)$.  Suppose $s$ is the maximal divisor of $m$ so that $E[s] \subseteq \mathcal{O}P$. 
	    In other words, $\mathcal{O}P \cong \ZZ/s\ZZ \times \ZZ/m\ZZ$ and $\mathcal{O}P / \ZZ P \cong \ZZ/s\ZZ$ as abelian groups.
	    In particular, $[s]\mathcal{O} P \in \ZZ P$. Thus $\mathcal{O} [s]P = \ZZ [s]P$.  
		
        We will show that $m' \mid 2s^2$ and $s \mid m'$. Let $\lambda \in \mathcal{O}$.  Then $[\lambda s]P = [ks]P$ for some $k = k(\lambda) \in \ZZ$, and then
	    \[
		    \widehat{T}_m^\tau([s]P,[s]P)^{k^2} =
		    \widehat{T}_m^\tau([ks]P,[ks]P) =
	    \widehat{T}_m^\tau([\lambda s]P,[\lambda s]P) =
	    \widehat{T}_m^\tau([s]P,[s]P)^{N(\lambda)}.
	    \]
	    Ranging over all $\lambda \in \mathcal{O}$, we conclude that $N(\lambda)$ are squares modulo $m'' := m'/\gcd(m',s^2)$, the multiplicative order of $\widehat{T}_m^{\tau}([s]P,[s]P)$, contradicting that $m$ is coprime to the discriminant
	    unless $m''=1$ or $2$ by Lemma~\ref{lem:squares}.  Therefore $m' \mid 2s^2$. In the case where $s = 1$, this argument implies only that the order of $\widehat{T}^\tau_m(P, P)$ is at most 2. However, the fact that in this case the order of the self-pairing is trivial follows immediately from \cite[Proposition 4.8]{attacks}.
	    
	    On the other hand, by
Theorem~\ref{thm:tate-sesqui} item \eqref{item:nondeg}, there exists some $Q$ so that $\widehat{T}_m^\tau(P,Q)$ has order $m$. Let $t = m/s$.  Then there is a basis for $E[m]$ of the form $P, P'$ where $[t]P' = [\lambda]P$ for some $\lambda \in \mathcal{O}$.  Writing $Q = [a]P + [b]P'$,
	    \[
		    \widehat{T}_m^\tau(P,Q)^{t} 
		    = \widehat{T}_m^\tau(P,[t]([a]P+[b]P'))
		    = \widehat{T}_m^\tau(P,[ta + b\lambda]P)
		    = \widehat{T}_m^{\tau}(P,P)^{ta+b\lambda}.
	    \]
	    This has order $s$ on the left.  Therefore $\widehat{T}_m^\tau(P,P)$ must have order a multiple of $s$.  Hence $s \mid m'$.
\qed
    \end{proof}

    \begin{remark}
        As discussed in Section~\ref{sec:CHM}, the authors of \cite{attacks}, the authors show that non-trivial cyclic self-pairings can only exist for $P$ of order $m$ dividing $\Delta_\mathcal{O}$.  The reason our pairings are not ruled out by this result is that our pairings are defined not only on \emph{cyclic subgroups stabilized by the orientation} (where they are in fact trivial, as required). 
    \end{remark}

    The following is a partial converse to Theorem~\ref{thm:compute-pair}.

    \begin{theorem}
	    \label{thm:equiv}
	    Let $E$ be an elliptic curve defined over a finite field $\FF$, and let $m \in \ZZ$.  Let a basis for $E[m]$ be given.
	    Suppose arithmetic in $\FF$, discrete logarithms in $\FF^*$ modulo $m$, and group law computations on $E[m]$ can all be accomplished in polynomial time.  
     Suppose $\varphi(m) > \sqrt{2/3}m$. 
	    Suppose $E$ is known to be oriented by $\mathcal{O} = \ZZ[\tau]$ (but the orientation $\iota$ is not given), and suppose $m$ is coprime to the discriminant $\Delta_\mathcal{O}$. Then the computation of arbitrary pairings $\widehat{T}^{\tau}_m(P,Q)$ on $E[m]$ is Monte-Carlo equivalent in polynomial time to the computation of the action of $[\tau]$ on $E[m]$.
    \end{theorem}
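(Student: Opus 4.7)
My plan is to give efficient algorithms in each direction of the equivalence. The forward direction (given $[\tau]$ on $E[m]$, compute pairings) is essentially Theorem~\ref{thm:compute-pair}, using the explicit Tate-pairing formula of item~\eqref{item:tn}: compute $[-\tau]P$ and $[\tau-\overline\tau]P$ as integer combinations of basis elements, evaluate the needed classical $m$-Tate pairings by Miller's algorithm, and raise to the known exponents $2N(\tau)$ and $Tr(\tau)$. This step uses no density hypothesis.

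For the reverse direction, I would proceed as follows. Fix a $\ZZ$-basis $\{P_1, P_2\}$ of $E[m]$. If $Q \in E[m]$ satisfies $\mathcal{O}Q = E[m]$, then by the non-degeneracy statement in item~\eqref{item:nondeg} the map $R \mapsto \widehat{T}_m^\tau(R, Q)$ from $E[m]$ to the reduced target $\mu_m^{\times 2}$ is $\ZZ$-linear and surjective, hence bijective by cardinality. Equivalently, the matrix $M_Q$ over $\ZZ/m\ZZ$ formed by taking coordinate-wise discrete logarithms of $\widehat{T}_m^\tau(P_1, Q)$ and $\widehat{T}_m^\tau(P_2, Q)$ is invertible; conversely, invertibility of $M_Q$ forces $\mathcal{O}Q = E[m]$, since any nonzero $\gamma \in \mathcal{O}/m\mathcal{O}$ killing $Q$ would force all pairing values into the proper $\gamma$-annihilator subgroup of $\mu_m^{\times 2}$, contradicting surjectivity. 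A Monte-Carlo search thus picks random $Q$ and tests invertibility of $M_Q$ via two oracle calls and discrete logarithms. Once a good $Q$ is fixed, I recover $[\tau]P_i$ for $i=1,2$ as follows: by sesquilinearity, $\widehat{T}_m^\tau([\tau]P_i, Q) = \widehat{T}_m^\tau(P_i, Q)^{\overline\tau}$, where the $\overline\tau$-action on the target is computed explicitly via the integer matrix $\rho(\overline\tau)$. Then invert the bijection $\widehat{T}_m^\tau(\cdot, Q)$ using $M_Q^{-1}$ to read off $[\tau]P_i$ by $2 \times 2$ linear algebra; this determines $[\tau]$ on all of $E[m]$.

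The main obstacle, and the role of the hypothesis $\varphi(m) > \sqrt{2/3}\, m$, is bounding the success probability of the Monte-Carlo search for $Q$. Since $m$ is coprime to $\Delta_\mathcal{O}$, Theorem~\ref{thm:cyclic} gives $E[m] \cong \mathcal{O}/m\mathcal{O}$ as $\mathcal{O}$-modules, and the generators of $E[m]$ as an $\mathcal{O}$-module correspond bijectively to units of $\mathcal{O}/m\mathcal{O}$. By CRT the density of units factors as $\prod_{p \mid m} d_p$, with $d_p = (1 - 1/p)^2$ when $p$ splits in $K$ and $d_p = 1 - 1/p^2$ when $p$ is inert; the worst case (all split) gives density $(\varphi(m)/m)^2$. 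The hypothesis makes this strictly exceed $2/3$, so a random $Q$ is good with constant probability and $O(1)$ trials suffice in expectation. The remaining ingredients --- discrete logarithms modulo $m$, $2 \times 2$ linear algebra over $\ZZ/m\ZZ$, and evaluating $\rho(\overline\tau)$ on pairing values --- are all polynomial time under the stated computational assumptions, yielding Monte-Carlo equivalence.
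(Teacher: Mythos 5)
Your argument is correct, and the reverse direction takes a genuinely different route from the paper's proof. The paper samples a candidate $P$ with (hopefully) $\mathcal{O}P = E[m]$, completes it to a basis $P,Q$, and recovers the $\mathcal{O}$-discrete logarithm $\lambda \in \mathcal{O}/m\mathcal{O}$ satisfying $Q = [\lambda]P$ from the relation $\widehat{T}^\tau_m(P,Q) = \widehat{T}^\tau_m(P,P)^\lambda$ (exploiting that the self-pairing is an $\mathcal{O}$-module generator of $\mu_m^{\times 2}$ when $\mathcal{O}P = E[m]$); it then solves $\tau = a + b\lambda$ to get $[\tau]P = [a]P + [b]Q$, and handles the failure case $\mathcal{O}P \neq E[m]$ by repeated sampling and majority vote. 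You instead keep the given $\ZZ$-basis $\{P_1,P_2\}$ fixed, sample $Q$ and \emph{certify} $\mathcal{O}Q = E[m]$ by checking invertibility of the $2\times 2$ discrete-log matrix $M_Q$ over $\ZZ/m\ZZ$, and then recover $[\tau]$ directly by transferring the $\overline\tau$-action on the target (computed via $\rho(\overline\tau)$) back to the source by inverting the bijection $\widehat{T}^\tau_m(\cdot, Q)$ through $M_Q^{-1}$. This avoids recovering $\lambda$ altogether, and---perhaps more significantly---the invertibility of $M_Q$ gives a deterministic success certificate: your algorithm is really Las Vegas rather than Monte Carlo (a small mislabel in your write-up), which is strictly stronger than the majority-vote argument in the paper and makes the density bound serve only to control expected running time. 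Both approaches use the same non-degeneracy and Theorem~\ref{thm:cyclic}, the same CRT-based density computation with worst case $(\varphi(m)/m)^2$, and the same use of item~\eqref{item:tn} for the forward direction, so the differences are confined to how the $[\tau]$-action is reconstructed from oracle access. One small caveat worth stating explicitly (which the paper's proof also elides): working in the reduced target $\mu_m^{\times 2}$ and applying the non-degeneracy of item~\eqref{item:nondeg} requires $\mu_m \subseteq \FF$ and $E[m] = E[m](\FF)$, which is implicit in the hypothesis that discrete logs in $\FF^*$ modulo $m$ are meaningful.
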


    By Monte-Carlo equivalent, we mean that there is an arbitrarily small probability that the algorithm will return incorrectly.  The condition on $\varphi(m)$ can be improved:  what should be required is that $\varphi(m)$ non-negligibly exceed $m/\sqrt{2}$.

    \begin{proof}
	    Note that computation of $[\tau]$ allows for computation of $[\overline{\tau}] = [Tr(\tau)] - [\tau]$. 

	    If $[\tau]$ is computable, then by Theorem~\ref{thm:tate-sesqui}~\eqref{item:tn} one can compute $\widehat{T}^{\tau}_m(P,Q)$ by computing $3$ multiplications by $[\tau]$ or $[\overline\tau]$, one addition, and $3$ classical Tate pairings.

	    Conversely, suppose one can compute $\widehat{T}^{\tau}_m(P,Q)$ for any $P, Q \in E[m]$.  We will show how to compute the action of $[\tau]$ on $E[m]$.  The pairing is non-degenerate as a consequence of the given hypotheses. It is possible to sample randomly from the subset of order $m$ points in $E[m]$, by choosing $P$ uniformly randomly as a linear combination $aP_1 + bP_2$ of the given basis $P_1, P_2$ such that $\gcd(a,b)$ is coprime to $m$. Choose such a $P$ of order $m$ and compute $\widehat{T}^{\tau}_m(P,P)$.

	    For now, we assume that $\mathcal{O}P = E[m]$. Choose $Q \in E[m]$ so that $P, Q$ form a basis for $E[m]$.
	    Then $Q = [\lambda] P$ for some $\lambda \notin \ZZ$; then
	    \[
		    \widehat{T}^{\tau}_m(P,Q) = \widehat{T}^{\tau}_m(P,P)^\lambda.
		    \]
		    Since $\widehat{T}^{\tau}_m(P,P)$ is of order $m$ by Theorem~\ref{thm:order}, we can compute $\lambda$ modulo $m$ by two pairing computations and a discrete logarithm in $(\FF^*)^{\times 2}/((\FF^*)^{\times 2})^m \cong \mu_m$. 

	By construction, we can write $\tau = a + b \lambda$ modulo $m$, so we can compute $[\tau]P = [a]P + [b]Q$.

	To compute $[\tau]R$ for arbitrary $R$, we first determine $\mu \in \mathcal{O}$ modulo $m$ such that $R = [\mu] P$ (we may use the same discrete log method as above), and then we have $[\tau]R = [\mu][\tau]P$.

	If $\mathcal{O}P \neq E[m]$, then the algorithm is not guaranteed to be correct.  Therefore, we run the algorithm several times using different random $P$ of order $m$.  We have $E[m] \cong \mathcal{O}/m\mathcal{O}$ by Theorem~\ref{thm:cyclic}.  Any element of $\mathcal{O}$ is a $\mathcal{O}$-module generator of $\mathcal{O}/m\mathcal{O}$ provided it is coprime to $m$ (since $1$ is a generator and it has an inverse modulo $m$).  So the proportion of such generators is at least $(\varphi(m)/m)^2$.  By our assumption on $m$, this exceeds $2/3$. Any such $P$ has self-pairing of order $m$ (by non-degeneracy), so repeating sufficiently often and taking the majority rule answer, this will succeed with overwhelming probability in polynomial time. 
\qed
	\end{proof}

\begin{remark}
    If $\varphi(m)/m$ is non-negligible, then one can sample points uniformly at random and use the pairing to check whether they generate $\mathcal{O}$, with a high probability of success.  However, if $m$ is badly behaved, for example, a primorial, then $\varphi(m)/m$ may be less than $1/m^x$ for some $x > 0$.
\end{remark}

	\begin{remark}
		\label{rem:eigen}
		Given any basis for $E[m]$, the pairing $\widehat{T}_m^\tau$ allows us to compute the `eigenspaces', i.e. a basis $P, Q$ such that $[\tau]P \in \ZZ P$ and $[\tau]Q \in \ZZ Q$.  That is, knowing the pairing values on the original basis, we can solve for points with trivial self-pairing.
	\end{remark}

	\begin{example}
		Consider the elliptic curve $y^2 = x^3 + x$ over $\FF_p$, $p=541$.  A basis for $E[5]$ is $P = (109, 208)$, $Q=(53, 195)$.  If we compute the self-pairings $\widehat{T}_5^{[i]}([a]P + [b]Q,[a]P+[b]Q)$, for $a,b = 0, \ldots, 4$, we obtain the following:  the left matrix shows the real parts and the right matrix the imaginary parts, taken to the $\log$ base 48 (48 is a generator of $\FF_{541}^*$).  So, for example, the fourth entry ($a=3$) in the second column ($b=1$) (of both matrices) indicates that $\widehat{T}_5^{[i]}([3]P+[1]Q,[3]P+[1]Q) = (g^3, g^4)$. 
  
		\[
\left(\begin{array}{rrrrr}
0 & 4 & 1 & 1 & 4 \\
0 & 2 & 2 & 0 & 1 \\
0 & 0 & 3 & 4 & 3 \\
0 & 3 & 4 & 3 & 0 \\
0 & 1 & 0 & 2 & 2
\end{array}\right), \quad
\left(\begin{array}{rrrrr}
0 & 2 & 3 & 3 & 2 \\
0 & 1 & 1 & 0 & 3 \\
0 & 0 & 4 & 2 & 4 \\
0 & 4 & 2 & 4 & 0 \\
0 & 3 & 0 & 1 & 1
\end{array}\right).
\]
We can also read off, for example, that $\widehat{T}_5^{[i]}(P,P) = \widehat{T}_5^{[i]}([2]P+Q,[2]P+Q) = 1$.  Thus the matrices have zeroes on the first column and on the coordinates $(a,b)$ which are multiples of $(2,1)$ modulo $5$.  This is as dictated by Theorem~\ref{thm:order}, because $P \in E[2+i]$ and $[2]P+Q \in E[2-i]$, which implies $[i]P = [3]P$ and $[i]([2]P+Q) = [2]([2]P+Q)$.  In other words, the subgroups $E[2\pm i] \cong \mathcal{O}/(2 \pm i)\mathcal{O}$ are the eigenspaces for the action of $\mathcal{O}$ on $E[5] \cong \mathcal{O}/5\mathcal{O}$. 
\end{example}

\section{Recovering partial torsion image information}
\label{sec:partial}

Our first observation is that when $E[m]$ is a cyclic $\mathcal{O}$-module, the pairings recover \emph{partial} information about the action of a hidden oriented isogeny $\phi$ on $E[m]$.

\begin{theorem}
	\label{thm:Nlambda}
	Let $E$ and $E'$ be $\mathcal{O}$-oriented supersingular curves over $\overline{\FF}_p$ upon which we can efficiently compute the action of a generator $\tau$ for $\mathcal{O}$.  Assume that the discrete logarithm in $\mu_m$ is efficiently computable. Assume also that $E[m]$ is a cyclic $\mathcal{O}$-module, and that the hidden oriented isogeny $\phi: E \rightarrow E'$ has known degree coprime to $m$.  Suppose we are given $P$ and $P'$ such that $\mathcal{O}P = E[m]$ and $\mathcal{O}P' = E'[m]$.  Then we can efficiently recover $N(\lambda)$ modulo $m$ for $\lambda \in \mathcal{O}$ such that $\phi P = [\lambda] P'$.
\end{theorem}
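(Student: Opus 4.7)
The plan is to combine the compatibility and sesquilinearity properties of $\widehat{T}^\tau_m$ to reduce recovery of $N(\lambda) \bmod m$ to a discrete logarithm in $\mu_m$. By compatibility (Theorem~\ref{thm:tate-sesqui}, item 2) applied to the $\mathcal{O}$-oriented isogeny $\phi$ of known degree,
\[
\widehat{T}^\tau_m(\phi P, \phi P) = \widehat{T}^\tau_m(P, P)^{\deg \phi}.
\]
On the other hand, substituting $\phi P = [\lambda] P'$ and applying sesquilinearity in both arguments (Theorem~\ref{thm:tate-sesqui}, item 1),
\[
\widehat{T}^\tau_m(\phi P, \phi P) = \widehat{T}^\tau_m([\lambda] P', [\lambda] P') = \widehat{T}^\tau_m(P', P')^{\overline{\lambda}\lambda} = \widehat{T}^\tau_m(P', P')^{N(\lambda)}.
\]
Equating the two expressions yields the key relation
\[
\widehat{T}^\tau_m(P', P')^{N(\lambda)} = \widehat{T}^\tau_m(P, P)^{\deg \phi}.
\]

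First I would compute both sides by the Miller-style algorithm of Theorem~\ref{thm:compute-pair}, which is effective because the action of $\tau$ is computable by hypothesis, and then pass to reduced pairings in $\mu_m^{\times 2}$ via Definition~\ref{def:red}. Since $\mathcal{O}P' = E'[m]$, Theorem~\ref{thm:order} (applied with $s = m$) guarantees that the reduced self-pairing $\widehat{T}^\tau_m(P', P')$ has $\ZZ$-order exactly $m$; writing it as $(\zeta^{b_1}, \zeta^{b_2}) \in \mu_m^{\times 2}$ for a fixed generator $\zeta$ of $\mu_m$, this forces $\gcd(b_1, b_2, m) = 1$. Writing the right-hand side analogously as $(\zeta^{c_1}, \zeta^{c_2})$, with $c_1, c_2$ extracted by two discrete logarithms in $\mu_m$, the key relation becomes the congruence system $N(\lambda) b_i \equiv c_i \pmod{m}$ for $i = 1, 2$. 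A B\'ezout combination $u_1 b_1 + u_2 b_2 \equiv 1 \pmod{m}$ then yields $N(\lambda) \equiv u_1 c_1 + u_2 c_2 \pmod{m}$ uniquely, in polynomial time.

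The delicate step is the claim that the reduced self-pairing has $\ZZ$-order exactly $m$: without it, the extracted value of $N(\lambda)$ is only determined modulo a proper divisor of $m$, defeating the purpose. This is precisely where Theorem~\ref{thm:order} is invoked, using the given hypothesis $\mathcal{O}P' = E'[m]$ (so $s = m$) together with the cyclicity of $E[m]$ as an $\mathcal{O}$-module (which by Theorem~\ref{thm:cyclic} ensures we are in the regime to which that order bound applies). All remaining steps --- evaluating $\widehat{T}^\tau_m$, taking discrete logarithms in $\mu_m$, and running the extended Euclidean algorithm modulo $m$ --- are polynomial time by the stated assumptions, so the entire recovery of $N(\lambda) \bmod m$ is efficient as claimed.
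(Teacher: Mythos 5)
Your proof is correct and takes essentially the same approach as the paper's: you derive the chain of equalities $\widehat{T}^\tau_m(P,P)^{\deg\phi} = \widehat{T}^\tau_m(\phi P,\phi P) = \widehat{T}^\tau_m([\lambda]P',[\lambda]P') = \widehat{T}^\tau_m(P',P')^{N(\lambda)}$ via compatibility and sesquilinearity, invoke Theorem~\ref{thm:order} with $s=m$ to get a full-order self-pairing, and solve a discrete log in $\mu_m$. You are slightly more explicit than the paper about how to extract $N(\lambda)$ from the two-coordinate pairing value (the B\'ezout step), but this is just unpacking what the paper compresses into ``solve a discrete log problem in $\mu_m$.''
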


\begin{proof}
	We have
    \[
\widehat{T}^{\tau}_{m}(P,P)^{\deg \phi}
= \widehat{T}^{\tau}_{m}(\phi P, \phi P)
= \widehat{T}^{\tau}_{m}(\lambda P', \lambda P')
= \widehat{T}^{\tau}_{m}(P',P')^{N(\lambda)}.
\]
Note that by Theorem~\ref{thm:order}, $\widehat{T}^{\tau}_m(P,P)$ has order $m$.
Using the reduced pairing, we can solve a discrete log problem in $\mu_m$ to obtain $N(\lambda)$ modulo $m$.  \qed
\end{proof}

\begin{remark}
	\label{rem:sqrt} 
	This result improves upon a na\"ive exhaustive search over the possible images of a general point $P$ (on account of Theorem~\ref{thm:order}, we cannot use an eigenvector).  More precisely, one could attack the class group action problem by trying all possible image points $\phi P$ for $P$, infer $\phi [\tau]P = [\tau] \phi P$, and use the imputed image of $E[m]$ for the SIDH attacks, checking for success at each attempt.  This is similar to \cite[Section 4.1]{MSIDH}, for example.  Here, the knowledge of $N(\lambda)$ restricts $\phi P$ to typically around $m$ possible images (between $m\prod_{\text{prime } q \mid m}(1-1/q)$ and $m\prod_{\text{prime } q \mid m}(1+1/q)$), rather than all $m^2$.  To run such an attack, we need the degree of $\phi$ to be known and $m^2 > \deg \phi$, $m$ to be coprime to $\deg \phi$, and $m$ to be smooth.  Since we have great freedom in choosing $m$, we can expect to choose an $m$ around $\sqrt{\deg \phi}$. 

		The example of \cite{attacks} described by \eqref{eqn:weilattack} in Section 2.3 shows that when $m$ is a power of a prime $\ell$ that splits in $\QQ(\sqrt{-p})$, the classical $m$-Weil pairing also provides an attack with this runtime. However, with the sesquilinear pairing, one does not require splitting conditions. 
\end{remark}

\begin{remark}
	\label{rem:decisional}
	This and other similar results in this paper and in \cite{Level} are a caution against Decisional Diffie-Hellman problems in which one must decide if a given point is the image point of a specified torsion point under a hidden isogeny.  A result like the previous one reduces the possibilities for the torsion image (without pinning it down entirely).  For an example, the IND-CPA hardness of SiGamal \cite{SiGamal} depends upon such a problem, called the P-CSSDDH assumption.  This is discussed in \cite[Section 6.1]{attacks}, where the authors lament the triviality of the available self-pairings.  There are non-trivial pairings of the type $\widehat{T}$ which would apply to the SiGamal situation, but only if we had access to a different orientation on the curves and isogeny.  The Frobenius orientation used in the P-CSSDDH assumption results in a trivial pairing once again, because the torsion is contained in the base field.  
\end{remark}

\begin{remark}
	There is a sense in which we cannot hope to obtain more information than $N(\lambda)$ modulo $m$ using these methods. If we post-compose our isogeny with an endomorphism from $\mathcal{O}$ of norm $1$ modulo $m$, then we do not change the degree modulo $m$, but we do change $\lambda$, replacing it with another $\lambda'$ having the same norm modulo $m$.  To detect the difference, we must feed in more information than just the degree modulo $m$.
	In fact, it is possible to recover the same result by a different method.  Take a basis for $E$ and $E'$ and change basis so that the Weil pairing takes a canonical diagonal form.  Then the set of possible endomorphisms in $\mathcal{O}$ that preserve this diagonal form turns out to be the same `degree of freedom' of $\lambda$ observed above.  
	The pairings from \cite{attacks} can be seen as getting around this by assuming $\lambda \in \ZZ$, in which case $N(\lambda)$ pins down $\lambda$ more effectively.
\end{remark}

\begin{remark}
	In principle, the result above doesn't require using $\widehat{T}$; it could be phrased in terms of one of the coordinates in Theorem~\ref{thm:tate-sesqui} \eqref{item:tn}.  This wouldn't violate the classification of cyclic self-pairings in \cite{attacks} because the domain is not $\ZZ$-cyclic.
\end{remark}

\section{Reduction from SIDH$_1$ to SIDH}
\label{sec:partial2}

In \cite{Level}, the authors consider a variety of variants on the SIDH problem which can be parameterized by level structure for on the $m$-torsion preserved by $\phi$. In particular, they define the following problem.

\begin{problem}[SIDH$_1$]
Fix $d, m \in \ZZ$.  Let $E, E'$ be elliptic curves defined over $\FF_q$, where $m$ is coprime to $q$.  Let $P \in E[m]$ have order $m$.  Suppose there exists an isogeny $\phi: E \rightarrow E'$ of known degree $d$ and $\phi P$ is given.  Find $\phi$.
\end{problem}

This can be compared to the classical SIDH problem, in which we are given full torsion image information.

\begin{problem}[SIDH]
	Fix $d, m \in \ZZ$.  Let $E, E'$ be elliptic curves defined over $\FF_q$, where $m$ is coprime to $q$.  Let $P, Q$ form a basis for $E[m]$.  Suppose there exists an isogeny $\phi: E \rightarrow E'$ of known degree $d$ and $\phi P$ and $\phi Q$ are given.  Find $\phi$.
\end{problem}

In either case we refer to $m$ as the \emph{level} of the SIDH or SIDH$_1$ problem.  The authors of \cite{Level} show that if $m$ has a large smooth square factor, then SIDH$_1$ of level $m$ (a single torsion point image of order $m$) reduces to SIDH of level $O(\sqrt{m})$ (two torsion point images of order $O(\sqrt{m})$).  More recently, a manuscript in preparation (presented at Caipi Symposium 2024 \cite{unpubCastryck})  generalizes the SIDH attacks of \cite{SIDHbreak1,SIDHbreak2,SIDHbreak3}, directly attacking SIDH$_1$ without the requirement that $m$ have a large square factor.  Both approaches require that $m > \deg \phi$.

Here we show that, \emph{if we have an oriented isogeny}, knowing a single image of order $m$ is enough to reduce to SIDH of level $m$ (on the same curve), assuming only that $m$ is smooth, with no assumption on $m$ being square, and no loss in level.  Thus using the SIDH attacks requires only $m^2 > \deg \phi$. 

Although the proof relies on taking an `imaginary quadratic viewpoint,' it does not make use of the sesquilinear pairings.

\begin{theorem}
	\label{thm:sidh}
    Let $E$ and $E'$ be $\mathcal{O}$-oriented supersingular curves over $\overline{\FF}_p$, upon which we can efficiently compute the action of endomorphisms from $\mathcal{O}$.  Assume that $m$ is smooth and coprime to the discriminant.  Assume also that $E[m]$ is a cyclic $\mathcal{O}$-module, and that the hidden isogeny $\phi: E \rightarrow E'$ has known degree coprime to $m$ and is compatible with the $\mathcal{O}$-orientations. 
    Then the problem SIDH$_1$ of level $m$ to find $\phi$ reduces, in a polynomial number of operations in the field of definition of $E[m]$, to SIDH of level $m$ on the same curve $E$ and same $\phi$.
\end{theorem}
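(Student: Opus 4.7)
The central observation is that because $\phi$ is compatible with the $\mathcal{O}$-orientations, we have $\phi \circ [\tau] = [\tau] \circ \phi$, where $\tau$ is a fixed $\mathbb{Z}$-generator of $\mathcal{O}$. Since we are assumed to compute the action of $\mathcal{O}$ efficiently on both curves, given the SIDH$_1$ data $(P, \phi P)$ we can, by one application of $[\tau]$ on $E'$, produce $\phi([\tau]P) = [\tau](\phi P)$ without any knowledge of $\phi$ itself. Thus from a single torsion-point image we obtain, for free, the image of the $\mathcal{O}$-companion point $[\tau]P$.

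The key remaining step is to show that $\{P,[\tau]P\}$ is in fact a $\mathbb{Z}$-basis of $E[m]$, which converts the given SIDH$_1$ instance into an SIDH instance at level $m$ on the same curve. By Theorem~\ref{thm:cyclic}, the hypothesis that $m$ is coprime to $\Delta_{\mathcal{O}}$ (hence to the relative conductor) together with $\mathcal{O}$-cyclicity of $E[m]$ gives an isomorphism $E[m] \cong \mathcal{O}/m\mathcal{O}$ of $\mathcal{O}$-modules, under which $\{1,\tau\}$ is a $\mathbb{Z}$-basis. Therefore $\{P,[\tau]P\}$ generates $E[m]$ as an abelian group exactly when $P$ corresponds to a unit in $\mathcal{O}/m\mathcal{O}$, equivalently when $\mathcal{O} P = E[m]$. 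Once this is verified one simply outputs the SIDH instance $(E,E',\phi,P,[\tau]P,\phi P,[\tau]\phi P)$; the cost is one action of $\tau$ on $E'$ plus a basis check, both polynomial in the size of the field of definition of $E[m]$.

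The main obstacle is the possibility that the given $P$ is not an $\mathcal{O}$-module generator, e.g.\ when $P$ lies in an $\mathcal{O}$-eigenspace and $[\tau]P \in \mathbb{Z} P$. I plan to handle this as follows: first, verify whether $\{P,[\tau]P\}$ spans $E[m]$, which is an efficient linear-algebra check against a known basis of $E[m]$. If it does, output the SIDH instance as above. If not, the non-generator locus corresponds to non-units in $\mathcal{O}/m\mathcal{O}$, a proper subset of order-$m$ points (of density at most $1-(\varphi(m)/m)^2$ by the argument in the proof of Theorem~\ref{thm:equiv}); in the smooth $m$ regime this is the exceptional case. Even there, one still obtains the image of $\phi$ on the proper cyclic $\mathcal{O}$-submodule $\mathcal{O} P$, and the complementary eigenspace information can be recovered by combining with the sesquilinear self-pairing techniques of Section~\ref{sec:partial}. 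The crux of the argument, and the only ingredient beyond routine bookkeeping, is therefore the rigidity provided by orientation compatibility: a single torsion image determines the action of $\phi$ on the entire $\mathcal{O}$-orbit of $P$.
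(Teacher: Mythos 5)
Your core observation --- that orientation compatibility gives $\phi([\tau]P) = [\tau]\phi P$ for free, and that $\{P, [\tau]P\}$ is a $\mathbb{Z}$-basis of $E[m]$ precisely when $\mathcal{O}P = E[m]$ (equivalently, $P$ corresponds to a unit of $\mathcal{O}/m\mathcal{O}$, since the change-of-basis matrix from $\{1,\tau\}$ to $\{u, \tau u\}$ has determinant $N(u)$) --- matches the paper's treatment of prime factors $q \mid m$ that are inert in $\mathcal{O}$, where this condition holds automatically for every order-$m$ point. However, your handling of the case $\mathcal{O}P \neq E[m]$ has a genuine gap.

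When a prime factor $q$ of $m$ is split in $\mathcal{O}$, there are plenty of order-$m$ points $P$ for which $\mathcal{O}P$ is a proper submodule of $E[m]$ (those whose coordinate in one $\mathcal{O}$-eigenspace is divisible by $q$), and in an SIDH$_1$ instance you do not get to choose $P$, so this is not an ``exceptional case'' that can be dismissed by a density count. Your fallback to the self-pairing techniques of Section~\ref{sec:partial} does not apply: Theorem~\ref{thm:Nlambda} explicitly hypothesizes $\mathcal{O}P = E[m]$, and by Theorem~\ref{thm:order} the self-pairing $\widehat{T}_m^\tau(P,P)$ on a non-generator $P$ has order dividing $2s^2$ where $s$ measures how much of $E[m]$ lies in $\mathcal{O}P$; in the extreme case where $P$ is an eigenvector the pairing is trivial, and no amount of pairing computation on $P$ alone recovers the action of $\phi$ on the complementary eigenspace.

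The missing ingredient is the hypothesis that $\deg\phi$ is \emph{known} and coprime to $m$, which your argument never uses. The paper's proof handles the split case by computing the $\mathcal{O}$-eigenbasis $S, T$ of $E[m]$ (and $S', T'$ of $E'[m]$) attached to the factorization $m = \mathfrak{b}\overline{\mathfrak{b}}$; since $\phi$ commutes with $[\tau]$, its matrix in these bases is diagonal, $\mathrm{diag}(k_1,k_2)$. Writing $R = [a]S + [b]T$ and $\phi R = [c]S' + [d]T'$, one gets $a k_1 \equiv c$ and $b k_2 \equiv d \pmod m$; since $R$ has order $m$, at least one of $a,b$ is coprime to $m$, which determines one of $k_1, k_2$, and the degree relation $k_1 k_2 \equiv \deg\phi \pmod m$ then determines the other (since $\deg\phi$ is a unit modulo $m$). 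Without this degree equation, a single torsion image genuinely cannot determine $\phi$ on a full basis when $P$ lies too close to an eigenspace, so your reduction is incomplete as written.
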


\begin{proof} For convenience, write $\mathcal{O} = \ZZ[\tau]$.
We are given $\phi R$ for some point $R \in E[m]$ of order $m$.  We wish to recover a second torsion point image resulting in an SIDH problem.  First, by Sunzi's Theorem, we can reduce the problem to prime powers $m=q^k$.  By assumption, $q$ is not ramified.  Hence we may assume $q$ is split or inert. 

\textbf{Case that $q$ is inert.} We know $\mathcal{O}R$ is an $\mathcal{O}$-submodule of $E[m] \cong \mathcal{O}/m\mathcal{O}$.  If $q$ is inert, it must be isomorphic to $\mathcal{O}/q^s\mathcal{O}$.  However, $\mathcal{O}/q^s\mathcal{O}$ doesn't have elements of additive order $q^k$ unless $s = k$.  Thus $\mathcal{O} R = E[m]$. 
Given any other point $Q$, we may compute $\eta$ such that $Q = [\eta] R$ (using basis $R$ and $[\tau]R$).  Then $\phi Q = \phi[\eta]R = [\eta] \phi R$.

\textbf{Case that $q$ is split.} 
 Write $m = q^k = \mathfrak{b} \overline{\mathfrak{b}}$, where $N(\mathfrak{b}) = m$. Write $\ker \mathfrak{b} := \{ P \in E[m] : \beta P = \mathcal{O} \text{ for all } \beta \in \mathfrak{b}\}$, and similarly for $\overline{\mathfrak{b}}$.  Then these are distinct cyclic subgroups of order $m$.  Thus there exists a basis $S, T$ for $E[m]$ so that $T \in \ker \mathfrak{b}$ and $S \in \ker \overline{\mathfrak{b}}$.  Similarly, let $S'$ and $T'$ be a basis for $E'[m]$ so that $T' \in \ker \mathfrak{b}$ and $S' \in \ker \overline{\mathfrak{b}}$.  To find such subgroups, one can use linear algebra, as follows.  The problem of finding $\ker \mathfrak{b}$ can be rephrased as solving for coefficients $a$ and $b$ for $T = aP + bQ$ in terms of a basis $P,Q$ for $E[m]$, subject to linear conditions determined by the action of $\mathfrak{b}$, which we can make explicit in terms of the known action of $\tau$.  In addition, by adding a gcd condition on the coefficients, one can choose $T$ to be of full order $m$.

Now the mapping $\phi$, as a matrix from basis $S,T$ to basis $S',T'$, is diagonal, with some integers $k_1$ and $k_2$ on the diagonal (as $\phi$ respects the $\mathcal{O}$-orientation).  By writing $R$ and $\phi R$ in the relevant bases, namely $R = [a]S + [b]T$, $\phi R = [c] S' + [d] T'$, we learn that $ak_1 \equiv c, bk_2 \equiv d \pmod m$, where $a,b,c,d$ are known.  We also know that $\deg \phi \equiv k_1k_2 \pmod{m}$.  Without loss of generality, at least one of $a$ or $b$ is coprime to $m$, so we know at least one of $k_1$ or $k_2$, and the degree equation then gives us the other.
\qed
\end{proof}

\section{Diagonal SIDH}
\label{sec:diag}

The following problem arises in \cite[Lemma 6 and Section 5.6]{Level}.

\begin{problem}[Diagonal SIDH]
	Fix $d, m \in \ZZ$.  Let $E, E'$ be elliptic curves defined over $\FF_q$, where $m$ is coprime to $q$.  Let $P,Q \in E[m]$ form a basis.  Suppose there exists an isogeny $\phi: E \rightarrow E'$ of known degree $d$.  Suppose that generators $P'$ of $\langle \phi P \rangle$ and $Q'$ of $\langle \phi Q \rangle$ are known.  Find $\phi$.
\end{problem}

Interestingly, when the curves are oriented, the Diagonal SIDH problem is amenable to a pairing-based attack, at least for certain conditions on $E[m]$. 

\begin{theorem}
	\label{thm:festa}
	Suppose $E$ and $E'$ are $\mathcal{O}$-oriented (and one can compute the action of the endomorphisms efficiently, as usual).  Let $m > 4\deg \phi$ be a smooth integer such that modulo $m$, $1$ has polynomially many square roots. Then Diagonal SIDH with known degree for an oriented isogeny $\phi: E \rightarrow E'$ is solvable in polynomial time, provided $\mathcal{O}P = E[m]$ or $\mathcal{O}Q = E[m]$.
\end{theorem}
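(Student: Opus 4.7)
The plan is to use the sesquilinear self-pairing to determine the unknown scalar $c$ relating $\phi P$ to the given generator $P'$, up to a polynomial number of candidates, and then invoke a one-torsion-point SIDH attack. Without loss of generality I take $\mathcal{O}P = E[m]$, and I write $\mathcal{O} = \ZZ[\tau]$.

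The first step is structural. Since $\gcd(\deg\phi, m) = 1$, the isogeny $\phi$ is injective on $E[m]$, and orientation-compatibility gives $\phi(\mathcal{O}P) = \mathcal{O}(\phi P)$. Comparing orders, $\mathcal{O}(\phi P) = \phi(E[m]) = E'[m]$. In particular $\phi P$ has $\ZZ$-order $m$, hence $\phi P = c\, P'$ for some $c$ coprime to $m$, and consequently $\mathcal{O}P' = E'[m]$ as well. Theorem~\ref{thm:order} then says that both self-pairings $\widehat{T}^{\tau}_m(P,P)$ and $\widehat{T}^{\tau}_m(P',P')$ have multiplicative order exactly $m$.

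The second step is the pairing equation. Combining the isogeny-compatibility of Theorem~\ref{thm:tate-sesqui} with sesquilinearity (and noting $\overline{c} = c$ for $c \in \ZZ$) yields
\[
\widehat{T}^{\tau}_m(P,P)^{\deg\phi} = \widehat{T}^{\tau}_m(\phi P, \phi P) = \widehat{T}^{\tau}_m(c P', c P') = \widehat{T}^{\tau}_m(P', P')^{c^2}.
\]
After final exponentiation (Definition~\ref{def:red}) this equality lives in $\mu_m$, and the smoothness of $m$ enables a polynomial-time discrete logarithm that recovers $c^2 \pmod{m}$. The hypothesis on square roots of $1$ modulo $m$ then converts this into a polynomial-size list of candidates $\widetilde{c}$ for $c$.

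The third step is verification via an SIDH attack. For each candidate $\widetilde{c}$, set $\widetilde{\phi P} := \widetilde{c}\, P'$; this is an SIDH$_1$ instance of level $m$ for $\phi$, and the hypothesis $m > 4\deg\phi$ permits the one-torsion-point SIDH attack of \cite{unpubCastryck}. Equivalently, since $\mathcal{O}P = E[m]$ one can solve $Q = [\mu]P$ for $\mu \in \mathcal{O}$ by linear algebra, set $\widetilde{\phi Q} := [\mu]\widetilde{c}\, P'$, and run a classical SIDH attack since $m^2 > 4\deg\phi$. Each candidate either outputs a degree-$\deg\phi$ isogeny consistent with the Diagonal-SIDH data or is discarded; the true $c$ yields $\phi$. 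The main obstacle is the square-root ambiguity in passing from $c^2$ to $c$, which is precisely what the hypothesis on square roots of $1$ is designed to absorb; a subsidiary point is ensuring $\mathcal{O}P' = E'[m]$ so that Theorem~\ref{thm:order} applies to $P'$ and the discrete logarithm is unambiguous, which is delivered by orientation-compatibility together with $\gcd(\deg\phi, m) = 1$.
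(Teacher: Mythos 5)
Your proof is correct and follows essentially the same route as the paper, which invokes Theorem~\ref{thm:Nlambda} to extract $N(\lambda)=c^2\pmod{m}$ and then iterates the SIDH attack over the polynomially many candidates for $c$. You additionally make explicit a point the paper leaves implicit: that orientation-compatibility plus $\gcd(\deg\phi,m)=1$ forces $\mathcal{O}P'=E'[m]$, which is a hypothesis of Theorem~\ref{thm:Nlambda} (and is needed for the discrete logarithm to be well-posed), and you spell out that $\mathcal{O}P=E[m]$ lets one promote the SIDH$_1$ instance to a full SIDH instance via $\widetilde{\phi Q}=[\mu]\widetilde{c}\,P'$.
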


\begin{proof}
	Let the Diagonal SIDH problem be given in terms of basis $P, Q$ for $E[m]$ and generators $P'$ and $Q'$ for $\langle \phi P \rangle$ and $\langle \phi Q \rangle$ respectively.
	Assume without loss of generality that $\mathcal{O}P = E[m]$.  Then by Theorem~\ref{thm:Nlambda}, we can efficiently recover $N(\lambda)$ modulo $m$ such that $\phi P = [\lambda] P'$.  However, the Diagonal SIDH setup guarantees that $\lambda \in \ZZ$, hence we have recovered $\lambda^2$ modulo $m$. By assumption, this gives only polynomially many possible values for $\lambda$, each of which can be tested by running the SIDH attacks, until one recovers $\phi$.\qed
\end{proof}

An instance of the Diagonal SIDH problem is the problem underlying the FESTA cryptosystem \cite[Problem 7]{festa}.  In this case $m$ is chosen to be a power of $2$, so the attack above would apply if FESTA were instantiated in a situation where the isogeny was oriented (for known orientations).  Assuming an $\mathcal{O}$-orientation, the condition $\mathcal{O}P = E[m]$ or $\mathcal{O}Q = E[m]$ is reasonably likely to occur by chance if not explicitly avoided.

\begin{corollary}
    \label{cor:festa}
	The hard problem underlying FESTA, namely CIST (see \cite{festa}), with $m > 4\deg \phi$, reduces to finding explicit $\mathcal{O}$-orientations of the curves $E$ and $E'$ respected by the isogeny $\phi$.
\end{corollary}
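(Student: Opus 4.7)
The plan is to cast CIST (the FESTA hard problem) as an instance of Diagonal SIDH and then invoke Theorem~\ref{thm:festa}. In FESTA a basis $P,Q$ of $E[m]$ is published along with generators of $\langle\phi P\rangle$ and $\langle\phi Q\rangle$ (the torsion point images scaled by hidden matrices, where the secret matrices act diagonally in the specified bases). This is precisely the Diagonal SIDH setup. The FESTA parameter $m$ is a power of $2$, so $1$ has at most four square roots modulo $m$, satisfying the polynomial-many-square-roots hypothesis of Theorem~\ref{thm:festa}, and the bound $m>4\deg\phi$ is exactly the degree condition in the corollary.

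Next, assume we are given an oracle that produces explicit $\mathcal{O}$-orientations $\iota,\iota'$ of $E$ and $E'$ which are respected by $\phi$, that is, with $\phi\circ\iota(\alpha)=\iota'(\alpha)\circ\phi$ and with the action of a generator $\tau$ of $\mathcal{O}$ efficiently computable on arbitrary points. To check the remaining hypothesis of Theorem~\ref{thm:festa}, I would compute the sesquilinear self-pairings $\widehat{T}^\tau_m(P,P)$ and $\widehat{T}^\tau_m(Q,Q)$: by Theorem~\ref{thm:order} (taking $s=m$), the condition $\mathcal{O}P=E[m]$ holds if and only if $\widehat{T}^\tau_m(P,P)$ has order exactly $m$, and likewise for $Q$. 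If at least one of the two self-pairings is of full order, Theorem~\ref{thm:festa} applies directly and produces $\phi$ in polynomial time, which establishes the reduction.

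The main obstacle is the case in which \emph{neither} $P$ nor $Q$ generates $E[m]$ as an $\mathcal{O}$-module. As indicated in the discussion preceding the corollary, this is a heuristic issue rather than a genuine one for generic FESTA instances: by Theorem~\ref{thm:cyclic}, once $m$ is coprime to the relative conductor of $\mathcal{O}$, $E[m]\cong\mathcal{O}/m\mathcal{O}$ is $\mathcal{O}$-cyclic, and the proportion of points of order $m$ failing to be $\mathcal{O}$-generators is $1-(\varphi(m)/m)^2$, which for $m$ a power of $2$ equals $3/4$; hence a FESTA basis chosen without adversarial intent is expected to contain a cyclic generator with constant probability. Under this genericity assumption, which the statement of the corollary tacitly adopts, the reduction from CIST to the orientation-finding problem succeeds, completing the proof.
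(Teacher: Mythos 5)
Your proposal is correct and matches the paper's (largely implicit) argument: recognize CIST as a Diagonal SIDH instance with $m$ a power of $2$, note that $1$ has only four square roots modulo $m$, assume an oracle for the $\mathcal{O}$-orientations respected by $\phi$, and invoke Theorem~\ref{thm:festa} under the heuristic assumption that one of the published basis points generates $E[m]$ as an $\mathcal{O}$-module. The paper itself gives no explicit proof, only the one-sentence discussion preceding the corollary, so your write-up is a useful fleshing out of that discussion.

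Two small cautions. First, you assert that $\mathcal{O}P=E[m]$ holds \emph{if and only if} $\widehat{T}^\tau_m(P,P)$ has order exactly $m$. Theorem~\ref{thm:order} gives the forward implication cleanly, but the converse does not follow from that theorem: from $s\mid m'\mid 2s^2$ with $m'=m$ one can only conclude $s\geq\sqrt{m/2}$, not $s=m$ (e.g.\ for $m=2^k$, any $s=2^j$ with $j\geq\lceil(k-1)/2\rceil$ is consistent with $m'=m$). This does not damage the reduction, since one can simply run the Theorem~\ref{thm:festa} attack on whichever basis point and verify the output, but the characterization as stated is too strong. Second, the $(\varphi(m)/m)^2$ figure appearing in Theorem~\ref{thm:equiv} is a \emph{lower bound} on the proportion of $\mathcal{O}$-generators among \emph{all} of $E[m]$, not an exact count of non-generators among points of additive order $m$; the heuristic conclusion (constant success probability) is fine, but the claimed exact failure rate of $3/4$ is not quite right.
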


\begin{remark}
    In \cite[Section 5.6]{Level}, it is shown how to reformulate the problem of finding an isogeny of fixed degree $d$ between oriented curves (the class group action problem) as a Diagonal SIDH problem, where $m$ is a product of 
    primes split in $\mathcal{O}$.
The method of reduction, in brief, uses the eigenspaces associated to a split prime in the orientation, which must map to each other. However, the conditions under which Theorem~\ref{thm:festa} applies -- that $m$ have few square roots, and $P$ or $Q$ be generators of $E[m]$ as an $\mathcal{O}$-module -- both fail in the Diagonal SIDH problems that result from the reduction of \cite{Level}.  This means we cannot chain these attacks together to attack class group action problems!
\end{remark}

\section{When $m$ divides the discriminant}
\label{sec:ramified}

Suppose $m \mid \Delta_\mathcal{O}$, where $m = N(\tau)$ for $\tau \in \mathcal{O}$.  In this case the pairing $\widehat{T}^\tau_m$ becomes trivial.  However, a modification is more interesting.  Let $m \in \ZZ$, and define
\begin{align*}
T'_m: E[m](\FF) \times & E(\FF)/[m]E(\FF) \rightarrow ((\FF^*)/(\FF^*)^m)^{\times 2}, \\
	T'_m(P,Q) &= \left( t_{m}([\tau]P,Q), t_{m}(P,Q) \right).
\end{align*}
This modification does not preserve all of the properties of Theorem~\ref{thm:tate-sesqui} but importantly, it is bilinear and inherits compatibility from $t_{m}$, so that for $\phi : E \rightarrow E'$ compatible with $\mathcal{O}$, we have
\[
	T'_m(\phi Q, \phi Q) = T'_m(Q,Q)^{\deg \phi}.
\]

In \cite{attacks}, the authors use generalized pairings to determine the image of a single torsion point in $E[m]$, and then reduce to SIDH with $E[\sqrt{m}]$ when $m$ is a smooth square.  As previously mentioned, recent further development of the SIDH attacks (in preparation \cite{unpubCastryck}) generalize to image information on subgroups of a large enough size, not just full torsion subgroups, which effectively removes the restriction that $m$ be square.

Inspired by this result, we develop a similar reduction using the pairing above.  The main advantage of our situation over that in \cite{attacks} is the computation of the pairing, which requires only operations in the field of definition of $E[m]$.  Because the pairings used in \cite{attacks} may require a move to the field of definition of $E[m^2]$, our pairings result in a speedup in cases where that field of definition is large.

    \begin{proposition}
	    \label{prop:order-t}
	    Let $E$ be an elliptic curve oriented by $\mathcal{O} = \ZZ[\tau]$.  
	    Let $\FF$ be a finite field containing the $m$-th roots of unity.  Suppose $E[m] = E[m](\FF)$ is a cyclic $\mathcal{O}$-module.
	    Let $P \in E[m]$ have order $m$.  
	    Then the multiplicative order $T'_m(P,P)$ is at least $m/t$ where $t$ is the minimal positive integer such that $[t]E[m] \subseteq \mathcal{O}P$.
    \end{proposition}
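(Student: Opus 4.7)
Set $x = t_m(P,P)$ and $y = t_m([\tau]P, P)$ in $\FF^*/(\FF^*)^m$, so $T'_m(P,P) = (y, x)$ and its multiplicative order $m'$ equals $|\langle x, y\rangle| = \mathrm{lcm}(\mathrm{ord}(x), \mathrm{ord}(y))$. The goal is to show $m' \geq m/t$. My strategy is to first bound the order of $T'_m(R, P)$ uniformly over $R \in E[m]$ in terms of $t$ and $m'$, using bilinearity and the defining property of $t$, and then to invoke non-degeneracy of the Tate pairing to exhibit an $R$ for which $T'_m(R,P)$ itself has order at least $m$.

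For $\lambda = a + b\tau \in \mathcal{O}$, bilinearity of $t_m$ gives $t_m([\lambda]P, P) = x^a y^b$, and using the minimal polynomial $\tau^2 = Tr(\tau)\tau - N(\tau)$ I compute $\tau\lambda = -bN(\tau) + (a + b\,Tr(\tau))\tau$, so that $t_m([\tau\lambda]P,P) = x^{-bN(\tau)}\, y^{a + b\,Tr(\tau)}$ also lies in $\langle x, y\rangle$. Hence $T'_m([\lambda]P, P) \in \langle x, y\rangle^{\times 2}$, a group in which every element has order dividing $m'$. Now for any $R \in E[m]$, the defining property $[t]E[m] \subseteq \mathcal{O}P$ lets me write $[t]R = [\lambda]P$ for some $\lambda \in \mathcal{O}$, so by $\ZZ$-bilinearity in the first argument
\[
T'_m(R, P)^t = T'_m([t]R, P) = T'_m([\lambda]P, P),
\]
whose order divides $m'$. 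Therefore the order of $T'_m(R, P)$ divides $tm'$ for every $R \in E[m]$.

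Finally I invoke non-degeneracy: by Proposition~\ref{prop:tate}, there exists $R \in E(\FF)[m] = E[m]$ with $t_m(R, P)$ of order exactly $m$ in $\mu_m$, so the second coordinate of $T'_m(R, P)$, and hence $T'_m(R, P)$ itself, has order at least $m$. Combined with the previous bound this gives $m \leq tm'$, i.e., $m' \geq m/t$, as claimed. The main subtlety --- and where I expect the proof to need the most care --- is this last appeal to non-degeneracy: to produce an $R \in E[m]$ with $t_m(R,P)$ of full order $m$, the class of $P$ in $E(\FF)/mE(\FF)$ must itself have full order $m$, equivalently $P \notin mE(\FF)$. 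This is the ``typical'' setting where $T'_m$ is used non-trivially; in the degenerate case $E[m^2] \subseteq E(\FF)$ the pairing $T'_m(P,P)$ is automatically trivial, so the inequality only has content when this hypothesis holds. Spelling out this condition (and showing it is implied by the cyclic $\mathcal{O}$-module hypothesis together with the genuine use of $T'_m$) is the one spot that will require careful tracking of the rational torsion.
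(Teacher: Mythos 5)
Your proof is correct modulo the gap you flag, and it is a genuine ``mirror image'' of the paper's. The paper keeps $P$ in the first slot of the pairing and uses bilinearity in the \emph{second} slot: it picks $Q$ with $t_m(P,Q)$ of full order $m$ (non-degeneracy in the first argument, which follows cleanly from $P$ having order $m$ in $E[m]$), writes $[t]Q=[\lambda]P$, expands $T'_m(P,Q)^t = T'_m(P,[\lambda]P)$, and observes that this is the image of $T'_m(P,P)$ under a $\ZZ/m\ZZ$-linear map of determinant $N(\lambda)$; since the left side has order $m/t$, the preimage $T'_m(P,P)$ has order at least $m/t$. You instead keep $P$ in the second slot, use bilinearity in the \emph{first} slot on a point $R\in E[m]$, and route the bound through the subgroup $\langle x,y\rangle^{\times 2}$. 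Your step ``$[t]R=[\lambda]P$'' is cleaner than the paper's, since $R$ is genuinely an element of $E[m]$, whereas the $Q$ produced by non-degeneracy lives a priori only in $E(\FF)$ and the paper does not justify that it (or a congruent representative) lies in $E[m]$. The cost you pay is symmetric: your appeal to non-degeneracy to get $t_m(R,P)$ of order $m$ requires the class of $P$ in $E(\FF)/mE(\FF)$ to have full order, which is not directly one of the stated hypotheses, and you correctly flag this but do not close it. In fact both proofs are implicitly assuming that the natural map $E[m]\to E(\FF)/mE(\FF)$ is an isomorphism (equivalently $E[m^2](\FF)=E[m]$): that is what lets the paper take $Q$ in $E[m]$, and what makes your $P$ have full-order class. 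So the gap you flag is real, but it is present in the paper's own proof as well rather than introduced by your change of route; in the paper's actual applications (e.g.\ Example~\ref{ex:wouter}) the $m$-primary part of $E(\FF)$ is exactly $E[m]$, so the assumption holds there. If you want to make your proof unconditional you should add this hypothesis (or verify it from context) explicitly; alternatively, you could switch to the paper's choice of auxiliary point to sidestep the second-argument non-degeneracy, at the price of justifying $[t]Q=[\lambda]P$ by the same implicit assumption.
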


\begin{proof}
	The classical Tate-Lichtenbaum pairing
	\[
		t_{m}: E[m](\FF) \times E(\FF)/[m]E(\FF) \rightarrow \FF^* / (\FF^*)^{m}
	\]
	is non-degenerate and, for $P$ of order $m$, there exists a $Q$ so $t_{m}(P,Q)$ has order $m$ (Proposition~\ref{prop:tate}).  
	Let $t$ be the minimal positive integer for which $[t]E[m] \subseteq \mathcal{O}P$. Then $[t]Q = [a + \tau b]P$.   Using Proposition~\ref{prop:tate},
	\begin{align*}
		T'_m(P,Q)^t &= T'_m(P,[a+\tau b]P) \\
			    &= T'_m(P,P)^{a}T'_m(P,[\tau] P)^{b} \\
			    &= \left(t_m([\tau]P,P)^{a}t_m([\tau]P,[\tau]P)^{b},t_m(P,P)^{a}t_m(P,[\tau]P)^{b}\right) \\
        &= \left(t_m([\tau]P,P)^{a}t_m(P,P)^{N(\tau)b},t_m(P,P)^{a + Tr(\tau)b}t_m([\tau]P,P)^{-b}\right).
	\end{align*}

	The left side has order $m/t$ by Proposition~\ref{prop:tate}.  Thus the right side has order $m/t$.  This is the image of $T'_m(P,P)$ via a linear transformation of determinant $N(\tau)b^2 + a^2 + Tr(\tau)ab = N(a + \tau b)$.  
 Therefore $T'_m(P,P)$ must have order at least $m/t$.\qed
\end{proof}

In the following, we assume $E[m]$ is a cyclic $\mathcal{O}$-module.  By Theorem~\ref{thm:cyclic}, it suffices that the $\mathcal{O}$-orientation be $m$-primitive.

\begin{theorem}
	\label{thm:ramified}
	Let $E$ and $E'$ be $\mathcal{O}$-oriented elliptic curves.
	Suppose there exists an oriented isogeny $\phi: E \rightarrow E'$ of known degree $d$.  
	Let $m$ be smooth, coprime to $d$, and chosen so that there are only polynomially many square roots of $1$ modulo $m$.  
 Suppose $m \mid \Delta_\mathcal{O}$.
	Suppose that $E[m]$ is a cyclic $\mathcal{O}$-module.
	Suppose $P \in E[m]$ such that $\mathcal{O}P = E[m]$, and $P' \in E'[m]$ such that $\mathcal{O}P' = E'[m]$. Then there exists an efficiently computable point $Q \in E[m]$ of order $m$ such that a subset $S \subset E'[m]$ of polynomial size containing $\phi(Q)$ can be computed in polynomially many operations in the field of definition of $E[m]$.
\end{theorem}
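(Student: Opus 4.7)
The plan is to combine the compatibility of the modified pairing $T'_m$ with the ramification of $\mathcal{O}$ at primes dividing $m$. Writing $\phi P = [\mu]P'$ with $\mu \in \mathcal{O}/m\mathcal{O}$, I first recover $N(\mu) \bmod m$ from the pairing, and then choose $Q$ in a nilpotent direction so that $\phi(Q)$ depends only on one ``coordinate'' of $\mu$, whose square is the known norm.

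\textbf{Recovering $N(\mu)$ modulo $m$.}  Since $\mathcal{O}P = E[m]$ and $\mathcal{O}P' = E'[m]$, Proposition~\ref{prop:order-t} tells us that both $T'_m(P,P)$ and $T'_m(P',P')$ have order $m$.  The adjoint property $t_m([\alpha]X, Y) = t_m(X, [\bar\alpha]Y)$ of the Tate pairing together with $\alpha\bar\alpha = N(\alpha) \in \ZZ$ gives
\[
T'_m([\mu]P', [\mu]P') = T'_m(P',P')^{N(\mu)}.
\]
Combined with compatibility $T'_m(\phi P, \phi P) = T'_m(P,P)^d$, a discrete logarithm in $\mu_m$ (efficient by smoothness of $m$) recovers $N(\mu) \bmod m$.

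\textbf{Choosing $Q$.}  Let $\tau^2 - t\tau + n = 0$ and set $u = 2\tau - t \in \mathcal{O}$, so that $u^2 = \Delta_\mathcal{O}$ and therefore $u^2 \equiv 0 \pmod m$ in $\mathcal{O}/m\mathcal{O}$.  Take $Q = [u]P$.  Decomposing $\mu = c + bu$ with $c, b \in \ZZ/m\ZZ$ and using $\bar u = -u$, we get $N(\mu) = c^2 - b^2 u^2 \equiv c^2 \pmod m$, and
\[
\phi Q = [u\mu]P' = [u(c + bu)]P' = [cu]P'.
\]
So $\phi(Q)$ depends only on $c \bmod m$, and $c$ is constrained to be a square root of the already-computed value $N(\mu)$.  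Since $\gcd(d, m) = 1$ forces $\phi$ to act bijectively on $E[m]$, $\mu$ and hence $c$ is a unit mod $m$, and so the hypothesis that $1$ has polynomially many square roots modulo $m$ bounds the number of admissible $c$ by a polynomial in the input size.  I output $S = \{[cu]P' : c^2 \equiv N(\mu) \pmod m\}$.

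\textbf{The main obstacle.}  The delicate step is verifying that $Q = [u]P$ has order exactly $m$ as an abelian group.  The clean case is $m$ squarefree with $p \, \| \, \Delta_\mathcal{O}$ for every prime $p \mid m$: in each local ring $\mathcal{O}/p\mathcal{O}$ the element $u$ is a uniformizer of the ramified prime, so $\mathcal{O}\cdot[u]P$ is cyclic of order $p$, and Sunzi's Theorem assembles order $m$ across the primes dividing $m$.  For heavier ramification, one refines $u$ componentwise, replacing it in each local factor by an element of the correct $\mathfrak{p}$-valuation that still squares to zero modulo the local part of $m$ while generating an $\mathcal{O}$-quotient of the required abelian order; the square-collapse argument in the previous paragraph is unaffected by this modification.
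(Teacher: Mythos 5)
Your approach follows the same broad strategy as the paper's proof: recover $N(\lambda)$ from a self-pairing with $T'_m$, then take a ``nilpotent direction'' $Q$ so that $\phi Q$ depends only on one coordinate of $\lambda$ whose square you now know.  In fact your $u = 2\tau - t$ coincides (up to notation) with the paper's choice $\tau = 2\sigma$ where $\sigma = \tfrac{\Delta + \sqrt{\Delta}}{2}$.  For $m$ odd your argument is correct: $\ZZ[u]$ has index $2$ in $\mathcal{O}$, but since $\gcd(m,2)=1$ the reduction $\ZZ[u]/m\ZZ[u] \to \mathcal{O}/m\mathcal{O}$ is an isomorphism, so the decomposition $\lambda = c + bu$ exists, $[u]P$ has order $m$, and the output set $\{[cu]P' : c^2 \equiv N(\lambda) \}$ is exactly what the paper produces in that case.  (Your worry about ``heavier ramification'' at odd primes is actually vacuous: with $m$ coprime to the relative conductor, $p\mid m\mid\Delta_\mathcal{O}$ forces $p\mid\Delta_K$, and $\Delta_K$ is squarefree at odd $p$, so $m$ is automatically squarefree away from $2$.)

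The genuine gap is the case $m$ even, which the theorem does not exclude and which the paper treats separately.  If $m$ is even then $\Delta_\mathcal{O}\equiv 0\pmod 4$, so $t = \operatorname{Tr}(\tau)$ is even, and hence $u = 2\tau - t = 2(\tau - t/2)$ is literally $2$ times an element of $\mathcal{O}$.  Consequently $Q = [u]P = [2]\bigl([\tau - t/2]P\bigr)$ has order at most $m/2$, not $m$, and the conclusion fails outright.  The same index-$2$ gap between $\ZZ[u]$ and $\mathcal{O}$ means the decomposition $\mu = c + bu$ with $c,b\in\ZZ/m\ZZ$ is not available modulo even $m$.  This cannot be repaired by adjusting $\mathfrak{p}$-adic valuations locally at $2$: the obstruction is the factor of $2$ built into $u$, not a valuation mismatch.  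The paper's fix is to use $\tau = \sigma$ (a genuine generator of $\mathcal{O}$) when $m$ is even, at the cost that $\sigma^2$ only vanishes modulo $m' = m/4$ (or $m/2$ when $m\equiv 2\pmod 4$), which is why the paper's output set is $\{[a][\tau]P' : a^2 \equiv N(\lambda)\} + E'[4]$, with an extra $E'[4]$-coset you do not have.  Your ``$c$ is a unit'' observation and the pairing step are fine, but to be a proof of the stated theorem you must handle the $2$-adic part explicitly (or restrict to odd $m$).
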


\begin{proof}

	Choose a point $P \in E[m]$ such that $\mathcal{O} P = E[m]$.
	Choose a point $P' \in E'[m]$ such that $\mathcal{O} P' = E'[m]$. Then $T'_{m}(P,P)$ and $T'_{m}(P',P')$ have order $m$ by Proposition~\ref{prop:order-t}.  Then
  \[
	  {T}'_m(P,P)^{\deg \phi}
	  = T'_m( \phi P,\phi P)
	  = T'_m( \lambda  P',\lambda P')
	  = T'_m( P',P')^{N(\lambda)}.
  \]
  (Note that $\lambda$ is an endomorphism, so here we use compatibility with isogenies, not sesquilinearity in general.) 
  Using a discrete logarithm, we can compute $N(\lambda) \pmod m$. 

Observe that the definition of $T_m'$ actually depends only on $\tau$ modulo $m$, and hence on $\ZZ[\tau]$ modulo $m$.  So we now choose $\tau$ in a specific way, possibly not generating all of $\mathcal{O}$ but only generating it modulo $m$.  In short, we claim the existence of $\tau \in \mathcal{O}$ with certain properties, namely that 

\begin{enumerate}
    \item $\ZZ[\tau] \equiv \mathcal{O}$ modulo $m$;
    \item $Tr(\tau)\equiv N(\tau) \equiv 0 \pmod{m'}$ where $m' = m/4$ if $4 \mid m$; $m' = m/2$ if $m \equiv 2 \pmod{4}$; and $m' = m$ otherwise.
\end{enumerate}
The existence of such a $\tau$ is a consequence of $m \mid \Delta_\mathcal{O}$, as follows. 
A generator for $\mathcal{O}$ is given by $\sigma = \frac{\Delta + \sqrt{\Delta}}{2}$ having trace $\Delta$ and norm $\frac{1}{4}(\Delta - \Delta^2)$. Then $\tau = 2\sigma$ already has the required properties if $m$ is odd.  If $m$ is even, then $4 \mid \Delta$, and the norm is divisible by $m'$, so $\tau = \sigma$ suffices.  
 Hence the minimal polynomial of $\tau$ is $x^2$ modulo $m'$, and $[\tau^2] E'[m] \subseteq E'[m/m'] \subseteq E'[4]$. 
  
  Write $\lambda \equiv a + b\tau$ modulo $m$. Then $N(\lambda) \equiv a^2 \pmod{m'}$.  
  Since the factorization of $m'$ is known, by assumption, we have an efficiently computable set of polynomial size of possible values of $a$.
Compute $[a][\tau]P'$.  For the correct $a$, this is the image of $[\tau]P$ under $\phi$ up to addition of a $4$-torsion point, since
  \[
	  \phi [\tau]P = [\lambda] [\tau] P' \in [a][\tau]P' + E'[4].
	  \]
	  Trying all possible values of $a$, and setting $Q = [\tau]P$, we obtain the set 
   \[
   \{ [a][\tau]P' : a^2 \equiv N(\lambda) \pmod m \} + E'[4]
   \]
   required by the statement. Observe that $P,Q$ form a basis for $E[m]$ by construction, so $Q$ has order $m$. \qed
\end{proof}

For each possible value of $a$, we have a putative $\phi([\tau]P)$, i.e. the action of $\phi$ on a single $m$-torsion point. The results of \cite{attacks,Level} can now be applied if $m$ is a smooth square, $d$ is powersmooth and $m > 4d$ , to reduce to  SIDH.  Alternatively, loosening the restriction that $m$ is a square will be possible with the new generalizations of SIDH mentioned above \cite{unpubCastryck}.

The following example demonstrates a new growing family of parameters for which solving the class group action problem (with known degree) is polynomial instead of exponential using Theorem~\ref{thm:ramified}.  

\begin{example}
	\label{ex:wouter}
	This is based on an example communicated to the authors by Wouter Castryck.
	Let $E: y^2 = x^3 + x$.  Let $p$ be a prime of the form $4 \cdot 3^r - 1$ with $r > 0$.  This curve is supersingular with $j=1728$ and endomorphisms $[i]: (x,y) \mapsto (-x, iy)$ and $\pi_p: (x,y) \mapsto (x^p,y^p)$.  Let
	\[
		\tau := \frac{i+\pi_p}{2} \in \End(E).
	\]
	Then $\tau^2 = - \frac{p+1}{4} = -3^{r}$, so $N(\tau) = 3^{r}$ and $Tr(\tau) = 0$.  Let $\mathcal{O} = \ZZ[\tau]$, having $N(\tau)\mid \Delta_\mathcal{O}$.  Let $m=3^r$.  Then $m \mid \Delta_\mathcal{O}$. 
 
 Since $\pi_p^2 = [-p]$, $E(\FF_{p^2}) \cong (\ZZ/(p+1)\ZZ)^2 \cong (\ZZ/4\cdot3^r\ZZ)^2$ \cite[Ex. 5.16.d]{silv09}. Therefore $E[3^r] \subseteq E(\FF_{p^2})$.

 Let $Q$ be an $\mathcal{O}$-generator of $E[3^r]$.
	Then by Proposition~\ref{prop:order-t}, $T'_{m}(Q,Q)$ has order $3^r$, and the polynomially many operations to run the attack of Theorem~\ref{thm:ramified} take place in $\FF_{p^2}$.  The SIDH portion of the attack requires that $4d < m = 3^{r}$.  

	By contrast, using the methods of \cite[Section 6.1]{attacks}, one would need a generalized pairing value for which only methods of computation taking place in the field of definition of $E[3^{2r}]$ are known, and this field degree grows exponentially with $r$.  (Specifically, in \cite[Section 5, p. 20]{attacks}, the authors give an estimated runtime for the pairing needed in the attack, noting that their method requires dividing a point by $m$ and working in the resulting field extension of degree as much as $O(m^2)$.)  That means that what was an exponential runtime in terms of $r$ under \cite{attacks} becomes a polynomial one using Theorem~\ref{thm:ramified}. 
\end{example}

\section{Supersingular class group action in the presence of another orientation}
\label{sec:twoorient}

The following theorem shows that, if we have two distinct orientations respected by $\phi$, then we can recover the action of $\phi$.  

Suppose $\mathcal{O} \subseteq \End(E)$. 
We use the notation $\mathcal{O}^\perp$ for the quadratic order orthogonal to $\mathcal{O}$ within the endomorphism ring, with respect to the geometry induced by the quaternion norm.

\begin{theorem}
	Let $E$ and $E'$ be supersingular elliptic curves for both of which we know orientations by two quadratic orders $\mathcal{O}$ and $\mathcal{O}'$ which together generate a rank $4$ sub-order of the endomorphism ring.  Let $\phi : E \rightarrow E'$ be an isogeny of known degree $d$.  Let $m$ be smooth, coprime to the discriminants of $\mathcal{O}$ and $\mathcal{O}'$, and suppose $1$ has only polynomially many square roots modulo $m$.  Suppose $\phi$ respects both the $\mathcal{O}$ and $\mathcal{O}'$ orientations.  Suppose $\mathcal{O}$-module generators are known for both $E[m]$ and $E'[m]$.  Suppose, finally, that $\mathcal{O}^\perp$ has elements of norm coprime to $m$.  Let $P \in E[m]$.  Then a subset of $E'[m]$ of polynomial size containing $\phi P$ can be computed in a polynomial number of operations in the field of definition of $E[m]$. 
\end{theorem}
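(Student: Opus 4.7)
The plan is to combine Theorem~\ref{thm:Nlambda}, applied with the $\mathcal{O}$-orientation (yielding a quadratic norm constraint), with a linear constraint derived from an element $\mu \in \mathcal{O}^\perp$ of norm coprime to $m$ inside the rank-$4$ sub-order $R$ generated by $\mathcal{O}$ and $\mathcal{O}'$. This will reduce the computation of $\phi|_{E[m]}$ to a single quadratic equation $k^2 \equiv c \pmod{m}$, whose number of solutions is polynomially bounded by the square-roots-of-unity hypothesis.

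Write $\mathcal{O} = \ZZ[\tau]$ and let $P_0 \in E[m]$, $P_0' \in E'[m]$ be the given $\mathcal{O}$-module generators, so that $\phi P_0 = [\lambda] P_0'$ for a unique $\lambda \in \mathcal{O}/m\mathcal{O}$. Applying Theorem~\ref{thm:Nlambda} with the $\mathcal{O}$-orientation and its sesquilinear self-pairing recovers $N(\lambda) \bmod m$ in polynomial time.

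Next, locate a $\mu \in R \cap \mathcal{O}^\perp$ with $N(\mu)$ coprime to $m$, by enumerating short combinations of a $\ZZ$-basis of $R$ and testing the norm condition; such $\mu$ exists by hypothesis. Standard quaternion-algebra facts give $\operatorname{Tr}(\mu) = 0$ and $\mu\alpha = \bar\alpha\mu$ for all $\alpha \in \mathcal{O}$, and since $\phi$ respects both the $\mathcal{O}$- and $\mathcal{O}'$-orientations it commutes with every element of $R$, in particular with $\mu$. Define $\nu, \nu' \in (\mathcal{O}/m\mathcal{O})^\times$ by $[\mu] P_0 = [\nu] P_0$ and $[\mu] P_0' = [\nu'] P_0'$; both are units since $N(\nu) \equiv -N(\mu) \equiv N(\nu') \pmod{m}$ is coprime to $m$. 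Equating the two expressions
\[
\phi([\mu] P_0) = [\nu] \phi(P_0) = [\nu \lambda] P_0' \qquad \text{and} \qquad \phi([\mu] P_0) = [\mu]\phi(P_0) = [\bar\lambda \mu] P_0' = [\bar\lambda \nu'] P_0'
\]
yields the algebraic identity $\nu \lambda \equiv \bar\lambda \nu' \pmod{m\mathcal{O}}$; setting $u := \nu^{-1} \nu'$, which has norm $1$, this becomes $\lambda = \bar\lambda u$. A short linear-algebra computation shows that the solutions $\lambda \in \mathcal{O}/m\mathcal{O}$ form a rank-one $\ZZ/m\ZZ$-submodule of the form $\{ k \lambda_0 : k \in \ZZ/m\ZZ \}$ for an explicit $\lambda_0$ (the matrix of $\lambda \mapsto \lambda - \bar\lambda u$ has determinant $1 - N(u) = 0$ and nonzero rank whenever $u \neq 1$, with the degenerate case $u = 1$ giving $\lambda \in \ZZ$, still rank one).

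Substituting into the norm constraint reduces the problem to $k^2 \equiv N(\lambda)/N(\lambda_0) \pmod{m}$, whose solution set is bounded in size by the number of square roots of unity modulo $m$, hence polynomial in $\log m$ by hypothesis. Each candidate $k$ determines a putative $\phi P_0 = [k \lambda_0] P_0'$, from which $\phi P = [\gamma k \lambda_0] P_0'$ follows for any $P = [\gamma] P_0 \in E[m]$; these form the required polynomial-size subset of $E'[m]$ containing the true image. The main obstacle is algorithmic: the hypothesis on $\mathcal{O}^\perp$ is existential, so one must both locate a suitable $\mu$ and implement its action on $E[m]$ and $E'[m]$. Given the explicit actions of $\mathcal{O}$ and $\mathcal{O}'$ on both curves, this reduces to a short search over combinations of generators of $R$ projected onto $\mathcal{O}^\perp$, with the norm test trivial when $m$ is smooth.
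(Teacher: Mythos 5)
Your argument is correct and reaches the same conclusion as the paper, but by a genuinely different route in the second half. Both arguments start by recovering $N(\lambda)\bmod m$ (you via Theorem~\ref{thm:Nlambda}; the paper by applying the displayed pairing identity with $\sigma_1=1$). To obtain a second, independent constraint, the paper performs a \emph{second} pairing discrete-log computation with $\sigma_2\in\mathcal{O}^\perp$, using $\widehat{T}_m^\tau([\sigma_2]P,P)^{\deg\phi}=\widehat{T}_m^\tau([\sigma_2]P',P')^{\overline{\lambda^{\sigma_2}}\lambda}$ and $\lambda^{\sigma_2}=\overline{\lambda}$ to read off $\lambda^2\bmod m$, and then solves the pair $(N(\lambda),\lambda^2)$. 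You instead exploit the conjugate-linearity of the $\mu$-action on the cyclic $\mathcal{O}$-modules $E[m]=\mathcal{O}P_0$ and $E'[m]=\mathcal{O}P_0'$ directly: computing $\nu,\nu'$ with $[\mu]P_0=[\nu]P_0$, $[\mu]P_0'=[\nu']P_0'$ is just a (smooth) discrete log in $E[m]$, and comparing $\phi([\mu]P_0)$ two ways yields the linear constraint $\nu\lambda\equiv\overline\lambda\,\nu'$, whose solution set you show is a rank-one $\ZZ/m\ZZ$-module $\ZZ/m\ZZ\cdot\lambda_0$. Substituting into $N(\lambda)=k^2 N(\lambda_0)$ gives the same kind of one-variable quadratic. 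What this buys you: only a single pairing/discrete-log in $\mu_m$ is needed, and the role of $\mathcal{O}^\perp$ as a conjugation operator on the cyclic module is made explicit rather than funneled through a second pairing identity. Two minor points worth tightening: (i) verify that a free generator $\lambda_0$ of the solution module exists over $\ZZ/m\ZZ$ rather than just over prime fields (Smith normal form prime-by-prime, using that $1-N(u)\equiv 0$ and the reduction mod $\ell$ has rank exactly one, handles this), and note that $N(\lambda_0)$ is automatically a unit mod $m$ since $N(\lambda)\equiv d$ is; (ii) as in the paper, the hypothesis on $\mathcal{O}^\perp$ should be read as $\mathcal{O}^\perp$ intersected with the computable rank-$4$ sub-order $R$, since $\mu$ must be an endomorphism whose action you can evaluate.
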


\begin{proof}

 Suppose $E[m] = \mathcal{O}P$ and $E'[m] = \mathcal{O}P'$.
	
	Let $\sigma \in \End(E)$ be chosen to have norm coprime to $m$. 
	Write $\lambda^\sigma$ for an element which participates in the equivalence $\lambda^\sigma \sigma \equiv \sigma \lambda \pmod m$.  Suppose $\lambda^\sigma \in \mathcal{O}$. 
	Then
	\begin{align*}
\widehat{T}_m^{\tau}([\sigma] P, P)^{\deg \phi} 
&= \widehat{T}_m^{\tau}( \phi [\sigma] P, \phi  P ) \\
&= \widehat{T}_m^{\tau}( [\sigma] \phi P,  \phi P) \\
&= \widehat{T}_m^{\tau}( [\sigma] [\lambda] P',  [\lambda] P' ) \\
&= \widehat{T}_m^{\tau}( [\lambda^\sigma] [\sigma] P', [\lambda] P' ) \\
&= \widehat{T}_m^{\tau}( [\sigma] P', P')^{\overline{\lambda^\sigma} \lambda}.
\end{align*}
Since the norm of $\sigma$ is coprime to $m$, $\widehat{T}_m^{\tau}([\sigma] P, P)$ has the same order as $\widehat{T}_m^{\tau}(P,P)$, which is $m$ by Theorem~\ref{thm:order}. 
Thus, we can compute $\overline{\lambda^\sigma}\lambda$ modulo $m$ by performing a discrete logarithm in $\mu_m$.   

We will now apply the above for two specially chosen $\sigma \in \End(E)$.  Since we can compute the action of $\mathcal{O}$ and $\mathcal{O}'$, we can compute the action of anything they generate.  Thus, we choose $\sigma_1 =1$ (so $\lambda^{\sigma_1} = \lambda$), and then some $\sigma_2 \in \mathcal{O}^\perp$, so $\lambda^{\sigma_2} = \overline{\lambda}$.  Assuming that $\mathcal{O}^\perp$ contains elements of norm coprime to $m$, from this, we obtain both $N(\lambda)$ and $\lambda^2$ modulo $m$.

Using $N(\lambda)$ and $\lambda^2$, we can solve for polynomially many possibilities for $\lambda$ modulo $m$ (this requires smoothness, so $m$ can be factored).  Then we have obtained $\phi P$.  From this we can compute any other $\phi R$ by solving for $R = [\mu] P$ and observing that $\phi R = \phi [\mu] P = [\mu] \phi P$.\qed 
\end{proof}

\subsection*{\ackname}  The authors are grateful to Wouter Castryck, Steven Galbraith, Marc Houben, Massimo Ostuzzi, Lorenz Panny, James Rickards and Damien Robert for helpful discussions.  They are also grateful to the anonymous referees for feedback which much improved the paper.  Both authors have been supported by NSF CAREER CNS-1652238 and NSF DMS-2401580 (PI K. E. Stange).

\subsection*{\discintname} The authors have no competing interests to declare that are relevant to the content of this article.

\bibliographystyle{alpha}

\begin{thebibliography}{CVCCD{\etalchar{+}}19}

\bibitem[ACKE{\etalchar{+}}24]{ACE+24}
Sarah Arpin, Wouter Castryck, Jonathan Komada~Eriksen, Gioella Lorenzon, and Frederick Vercauteren.
\newblock International workshop on the arithmetic of finite fields.
\newblock In {\em Generalized class group actions on oriented elliptic curves with level structure}, 2024.

\bibitem[Arp24]{Arp24}
Sarah Arpin.
\newblock Adding level structure to supersingular elliptic curve isogeny graphs.
\newblock {\em Journal de Théorie des Nombres de Bourdeaux, to appear}, 2024.

\bibitem[BF23]{NewCounter}
Andrea Basso and Tako~Boris Fouotsa.
\newblock New {SIDH} countermeasures for a more efficient key exchange.
\newblock In {\em Advances in cryptology---{ASIACRYPT} 2023. {P}art {VIII}}, volume 14445 of {\em Lecture Notes in Comput. Sci.}, pages 208--233. Springer, Singapore, [2023] \copyright 2023.

\bibitem[BMP23]{festa}
Andrea Basso, Luciano Maino, and Giacomo Pope.
\newblock {{FESTA}}: fast encryption from supersingular torsion attacks.
\newblock In {\em Advances in cryptology---{ASIACRYPT} 2023. {P}art {VII}}, volume 14444 of {\em Lecture Notes in Comput. Sci.}, pages 98--126. Springer, Singapore, [2023] \copyright 2023.

\bibitem[CD23]{SIDHbreak1}
Wouter Castryck and Thomas Decru.
\newblock An efficient key recovery attack on {SIDH}.
\newblock In {\em Advances in cryptology---{EUROCRYPT} 2023. {P}art {V}}, volume 14008 of {\em Lecture Notes in Comput. Sci.}, pages 423--447. Springer, Cham, [2023] \copyright 2023.

\bibitem[CDM{\etalchar{+}}24]{unpubCastryck}
Wouter Castryck, Thomas Decru, Luciano Maino, Chloe Martindale, Lorenz Panny, Giacomo Pope, Damien Robert, and Benjamin Wesolowski.
\newblock Isogeny interpolation, 2024.
\newblock Manuscript in preparation, presented at Caipi Symposium, Rennes, April 30, 2024, \url{https://caipi_symposium.pages.math.cnrs.fr/page-web/editions/avr24.html}.

\bibitem[CHM{\etalchar{+}}23]{attacks}
Wouter Castryck, Marc Houben, Simon-Philipp Merz, Marzio Mula, Sam~van Buuren, and Frederik Vercauteren.
\newblock Weak instances of class group action based cryptography via self-pairings.
\newblock In Helena Handschuh and Anna Lysyanskaya, editors, {\em Advances in Cryptology -- CRYPTO 2023}, pages 762--792, Cham, 2023. Springer Nature Switzerland.

\bibitem[CHVW22]{DDH2}
Wouter Castryck, Marc Houben, Frederik Vercauteren, and Benjamin Wesolowski.
\newblock On the decisional {D}iffie-{H}ellman problem for class group actions on oriented elliptic curves.
\newblock {\em Res. Number Theory}, 8(4):Paper No. 99, 18, 2022.

\bibitem[CK20]{OSIDH}
Leonardo Col\`o and David Kohel.
\newblock Orienting supersingular isogeny graphs.
\newblock {\em J. Math. Cryptol.}, 14(1):414--437, 2020.

\bibitem[CL24]{CL24}
Giulio Codogni and Guido Lido.
\newblock Spectral theory of isogeny graphs, 2024.
\newblock https://arxiv.org/abs/2308.13913.

\bibitem[CLG09]{CGL}
Denis~X. Charles, Kristin~E. Lauter, and Eyal~Z. Goren.
\newblock Cryptographic hash functions from expander graphs.
\newblock {\em J. Cryptology}, 22(1):93--113, 2009.

\bibitem[CLM{\etalchar{+}}18]{CSIDH}
Wouter Castryck, Tanja Lange, Chloe Martindale, Lorenz Panny, and Joost Renes.
\newblock C{SIDH}: an efficient post-quantum commutative group action.
\newblock In {\em Advances in cryptology---{ASIACRYPT} 2018. {P}art {III}}, volume 11274 of {\em Lecture Notes in Comput. Sci.}, pages 395--427. Springer, Cham, 2018.

\bibitem[Cou06]{Couveignes}
Jean-Marc Couveignes.
\newblock Hard homogeneous spaces.
\newblock Cryptology ePrint Archive, Paper 2006/291, 2006.
\newblock \url{https://eprint.iacr.org/2006/291}.

\bibitem[Cox22]{cox22}
David~A. Cox.
\newblock {\em Primes of the Form $x^2 + ny^2$: Fermat, Class Field Theory, and Complex Multiplication. Third Edition with Solutions}.
\newblock AMS Chelsea Publishing, 2022.

\bibitem[CSV22]{DDH1}
Wouter Castryck, Jana Sot\'{a}kov\'{a}, and Frederik Vercauteren.
\newblock Breaking the decisional {D}iffie-{H}ellman problem for class group actions using genus theory: extended version.
\newblock {\em J. Cryptology}, 35(4):Paper No. 24, 30, 2022.

\bibitem[CVCCD{\etalchar{+}}19]{Side}
Daniel Cervantes-V\'azquez, Mathilde Chenu, Jes\'us-Javier Chi-Dom\'inguez, Luca De~Feo, Francisco Rodr\'iguez-Henr\'iquez, and Benjamin Smith.
\newblock Stronger and faster side-channel protections for {CSIDH}.
\newblock In {\em Progress in cryptology---{LATINCRYPT} 2019}, volume 11774 of {\em Lecture Notes in Comput. Sci.}, pages 173--193. Springer, Cham, 2019.

\bibitem[EHL{\etalchar{+}}20]{path}
Kirsten Eisentr\"{a}ger, Sean Hallgren, Chris Leonardi, Travis Morrison, and Jennifer Park.
\newblock Computing endomorphism rings of supersingular elliptic curves and connections to path-finding in isogeny graphs.
\newblock In {\em A{NTS} {XIV}---{P}roceedings of the {F}ourteenth {A}lgorithmic {N}umber {T}heory {S}ymposium}, volume~4 of {\em Open Book Ser.}, pages 215--232. Math. Sci. Publ., Berkeley, CA, 2020.

\bibitem[FFP24]{Level}
Luca~De Feo, Tako~Boris Fouotsa, and Lorenz Panny.
\newblock Isogeny problems with level structure.
\newblock Springer-Verlag, 2024.

\bibitem[FMP23]{MSIDH}
Tako~Boris Fouotsa, Tomoki Moriya, and Christophe Petit.
\newblock M-{SIDH} and {MD}-{SIDH}: countering {SIDH} attacks by masking information.
\newblock In {\em Advances in cryptology---{EUROCRYPT} 2023. {P}art {V}}, volume 14008 of {\em Lecture Notes in Comput. Sci.}, pages 282--309. Springer, Cham, [2023] \copyright 2023.

\bibitem[GPV23]{GPV23}
Steven~D. Galbraith, Derek Perrin, and José~Felipe Voloch.
\newblock {CSIDH} with level structure.
\newblock Cryptology {ePrint} Archive, Paper 2023/1726, 2023.

\bibitem[IJ13]{pairing-volcano}
Sorina Ionica and Antoine Joux.
\newblock Pairing the volcano.
\newblock {\em Math. Comp.}, 82(281):581--603, 2013.

\bibitem[KLPT14]{KLPT}
David Kohel, Kristin Lauter, Christophe Petit, and Jean-Pierre Tignol.
\newblock On the quaternion $\ell$-isogeny path problem.
\newblock {\em LMS J.Comput. Math.}, 2014.

\bibitem[KT19]{PairingGroups}
Takeshi Koshiba and Katsuyuki Takashima.
\newblock New assumptions on isogenous pairing groups with applications to attribute-based encryption.
\newblock In {\em Information security and cryptology---{ICISC} 2018}, volume 11396 of {\em Lecture Notes in Comput. Sci.}, pages 3--19. Springer, Cham, 2019.

\bibitem[Len96]{Lenstra}
H.~W. Lenstra, Jr.
\newblock Complex multiplication structure of elliptic curves.
\newblock {\em J. Number Theory}, 56(2):227--241, 1996.

\bibitem[Mil04]{Miller}
Victor~S. Miller.
\newblock The weil pairing, and its efficient calculation.
\newblock {\em J. Cryptology}, 17(4):235--261, 2004.

\bibitem[MMP{\etalchar{+}}23]{SIDHbreak2}
Luciano Maino, Chloe Martindale, Lorenz Panny, Giacomo Pope, and Benjamin Wesolowski.
\newblock A direct key recovery attack on {SIDH}.
\newblock In {\em Advances in cryptology---{EUROCRYPT} 2023. {P}art {V}}, volume 14008 of {\em Lecture Notes in Comput. Sci.}, pages 448--471. Springer, Cham, [2023] \copyright 2023.

\bibitem[MOT20]{SiGamal}
Tomoki Moriya, Hiroshi Onuki, and Tsuyoshi Takagi.
\newblock Si{G}amal: a supersingular isogeny-based {PKE} and its application to a {PRF}.
\newblock In {\em Advances in cryptology---{ASIACRYPT} 2020. {P}art {II}}, volume 12492 of {\em Lecture Notes in Comput. Sci.}, pages 551--580. Springer, Cham, [2020] \copyright 2020.

\bibitem[Rei23]{Effective}
Krijn Reijnders.
\newblock Effective pairings in isogeny-based cryptography.
\newblock In {\em Progress in cryptology---{LATINCRYPT} 2023}, volume 14168 of {\em Lecture Notes in Comput. Sci.}, pages 109--128. Springer, Cham, [2023] \copyright 2023.

\bibitem[Rob23a]{Robert}
Damien Robert.
\newblock The geometric interpretation of the {T}ate pairing and its applications.
\newblock Cryptology ePrint Archive, Paper 2023/177, 2023.
\newblock \url{https://eprint.iacr.org/2023/177}.

\bibitem[Rob23b]{SIDHbreak3}
Damien Robert.
\newblock Breaking {SIDH} in polynomial time.
\newblock In {\em Advances in cryptology---{EUROCRYPT} 2023. {P}art {V}}, volume 14008 of {\em Lecture Notes in Comput. Sci.}, pages 472--503. Springer, Cham, [2023] \copyright 2023.

\bibitem[RS06]{RS}
Alexander Rostovtsev and Anton Stolbunov.
\newblock Public-key cryptosystem based on isogenies.
\newblock Cryptology ePrint Archive, Paper 2006/145, 2006.
\newblock \url{https://eprint.iacr.org/2006/145}.

\bibitem[Sil09]{silv09}
Joseph~H. Silverman.
\newblock {\em The Arithmetic of Elliptic Curves, Second Edition}.
\newblock Springer, 2009.

\bibitem[Sta24]{katepairing}
Katherine~E. Stange.
\newblock Sesquilinear pairings on elliptic curves, 2024.
\newblock \url{https://arxiv.org/abs/2405.14167}.

\bibitem[Wes22]{Wesolowski}
Benjamin Wesolowski.
\newblock The supersingular isogeny path and endomorphism ring problems are equivalent.
\newblock In {\em 2021 {IEEE} 62nd {A}nnual {S}ymposium on {F}oundations of {C}omputer {S}cience---{FOCS} 2021}, pages 1100--1111. IEEE Computer Soc., Los Alamitos, CA, [2022] \copyright 2022.

\end{thebibliography}

\newcommand{\etalchar}[1]{$^{#1}$}

\end{document}